\theoremstyle{plain}
\newtheorem{thm}{Theorem}
\newtheorem{lem}[thm]{Lemma}
\newtheorem{prop}[thm]{Proposition}
\theoremstyle{definition}
\newtheorem{example}[thm]{Example}
\newtheorem{defn}[thm]{Definition}
\newtheorem{conj}[thm]{Conjecture}
\newtheorem{quest}{Question}
\newcommand{\vmn}[1]{m_v(#1)} % notation for virtual mosaic number
\newcommand{\mn}[1]{m(#1)} % notation for classical mosaic number
\newcommand{\meq}[2][0mm]{\raisebox{-.88mm}{\hspace{#1}\ensuremath{#2}\hspace{#1}}} % arrows, etc. with optional space between mosaics
\newcommand{\tile}[2][1]{\begin{mosaic}[#1] #2 \\ \end{mosaic}}
   \foreach \x [count=\y] in #4
   \foreach \x [count=\y] in #5
   \foreach \x [count=\y] in #6
   \foreach \x [count=\y] in #7
\newcommand{\cspace}{.1} % spacing for crossings
\newcommand{\tileedge}{.05pt} % thickness of tile edge. Does not scale.
\newcommand{\knotthickness}{1.5pt} % thickness of knot for 1cm tile. Scales with tile size.
\newcommand{\graycolor}{gray!40}
\newcommand{\tileo}{\tikz \draw[tilestyle] (0,0) rectangle (\tilesize,\tilesize);}
\newcommand{\tilei}[1][{}]{\tikz {\draw[tilestyle] (0,0) rectangle (\tilesize,\tilesize); \draw[#1,knotstyle={\tilesize*\knotthickness}] (0,.5*\tilesize) to[out=0, in=90] (.5*\tilesize,0);}}
\newcommand{\tileii}[1][{}]{\tikz {\draw[tilestyle] (0,0) rectangle (\tilesize,\tilesize); \draw[#1,knotstyle={\tilesize*\knotthickness}] (\tilesize,.5*\tilesize) to[out=180, in=90] (.5*\tilesize,0);}}
\newcommand{\tileiii}[1][{}]{\tikz {\draw[tilestyle] (0,0) rectangle (\tilesize,\tilesize); \draw[#1,knotstyle={\tilesize*\knotthickness}] (\tilesize,.5*\tilesize) to[out=180, in=-90] (.5*\tilesize,\tilesize);}}
\newcommand{\tileiv}[1][{}]{\tikz {\draw[tilestyle] (0,0) rectangle (\tilesize,\tilesize); \draw[#1,knotstyle={\tilesize*\knotthickness}] (0,.5*\tilesize) to[out=0, in=-90] (.5*\tilesize,\tilesize);}}
\newcommand{\tilev}[1][{}]{\tikz {\draw[tilestyle] (0,0) rectangle (\tilesize,\tilesize); \draw[#1,knotstyle={\tilesize*\knotthickness}] (0,.5*\tilesize) to (\tilesize,.5*\tilesize);}}
\newcommand{\tilevi}[1][{}]{\tikz {\draw[tilestyle] (0,0) rectangle (\tilesize,\tilesize); \draw[#1,knotstyle={\tilesize*\knotthickness}] (.5*\tilesize,0) to (.5*\tilesize,\tilesize);}}
\newcommand{\tilevii}[1][{}]{\tikz {\draw[tilestyle] (0,0) rectangle (\tilesize,\tilesize); \draw[#1,knotstyle={\tilesize*\knotthickness}] (0,.5*\tilesize) to[out=0, in=90] (.5*\tilesize,0); \draw[#1,knotstyle={\tilesize*\knotthickness}] (\tilesize,.5*\tilesize) to[out=180, in=-90] (.5*\tilesize,\tilesize);}}
\newcommand{\tileviii}[1][{}]{\tikz {\draw[tilestyle] (0,0) rectangle (\tilesize,\tilesize); \draw[#1,knotstyle={\tilesize*\knotthickness}] (\tilesize,.5*\tilesize) to[out=180, in=90] (.5*\tilesize,0); \draw[#1,knotstyle={\tilesize*\knotthickness}] (0,.5*\tilesize) to[out=0, in=-90] (.5*\tilesize,\tilesize);}}
\newcommand{\tileix}[1][{}]{\tikz {\draw[tilestyle] (0,0) rectangle (\tilesize,\tilesize); \draw[#1,knotstyle={\tilesize*\knotthickness}] (0,.5*\tilesize) to (\tilesize,.5*\tilesize); \draw[#1,knotstyle={\tilesize*\knotthickness}] (.5*\tilesize,0) to (.5*\tilesize,.5*\tilesize-\tilesize*\cspace); \draw[#1,knotstyle={\tilesize*\knotthickness}] (.5*\tilesize,.5*\tilesize+\tilesize*\cspace) to (.5*\tilesize,\tilesize);}}
\newcommand{\tilex}[1][{}]{\tikz {\draw[tilestyle] (0,0) rectangle (\tilesize,\tilesize); \draw[#1,knotstyle={\tilesize*\knotthickness}] (.5*\tilesize,0) to (.5*\tilesize,\tilesize); \draw[#1,knotstyle={\tilesize*\knotthickness}] (0,.5*\tilesize) to (.5*\tilesize-\tilesize*\cspace,.5*\tilesize); \draw[#1,knotstyle={\tilesize*\knotthickness}] (.5*\tilesize+\tilesize*\cspace,.5*\tilesize) to (\tilesize,.5*\tilesize);}}
\newcommand{\tilec}[1][{}]{\tikz {\draw[tilestyle] (0,0) rectangle (\tilesize,\tilesize); \draw[#1,knotstyle={\tilesize*\knotthickness}] (.5*\tilesize,0) to (.5*\tilesize,\tilesize); \draw[#1,knotstyle={\tilesize*\knotthickness}] (0,.5*\tilesize) to (\tilesize,.5*\tilesize); \draw[#1,knotstyle={.5*\tilesize*\knotthickness}] (.5*\tilesize,.5*\tilesize) circle[radius=.15*\tilesize];}}
\newcommand{\tileuddots}{\tikz{\useasboundingbox (0,0) rectangle (\tilesize,\tilesize); \draw[fill=black] (.3*\tilesize,.3*\tilesize) circle[radius=.5pt];\draw[fill=black] (.5*\tilesize,.5*\tilesize) circle[radius=.5pt];\draw[fill=black] (.7*\tilesize,.7*\tilesize) circle[radius=.5pt];}}
\newcommand{\tiledddots}{\tikz{\useasboundingbox (0,0) rectangle (\tilesize,\tilesize); \draw[fill=black] (.3*\tilesize,.7*\tilesize) circle[radius=.5pt];\draw[fill=black] (.5*\tilesize,.5*\tilesize) circle[radius=.5pt];\draw[fill=black] (.7*\tilesize,.3*\tilesize) circle[radius=.5pt];}}
\tikzset{tilestyle/.style={use as bounding box, line width=\tileedge}} % Add colors/styles here for tile edge and fill. Example: fill=\graycolor for light gray background
\tikzset{knotstyle/.style={line width=#1}} % Add colors and other line styles here
\tikzset{mosai/.style={execute at begin cell=\node\bgroup, execute at end cell=\egroup;, column sep=0mm, inner sep=0}}
\newcommand{\tilefit}{.7} % Adjust this value so matrix of 11 tiles fits on page
\title{Virtual Mosaic Knot Theory}
\author{Sandy Ganzell \and Allison Henrich}
\begin{document}

\maketitle

\begin{abstract}
Mosaic diagrams for knots were first introduced in 2008 by Lomanoco and Kauffman for the purpose of building a quantum knot system. Since then, many others have explored the structure of these knot mosaic diagrams, as they are interesting objects of study in their own right. Knot mosaics have been generalized by Gardu\~{n}o to virtual knots, by including an additional tile type to represent virtual crossings. There is another interpretation of virtual knots, however, as knot diagrams on surfaces, which inspires this work. By viewing classical mosaic diagrams as $4n$-gons and gluing edges of these polygons, we obtain knots on surfaces that can be viewed as virtual knots. These virtual mosaics are our present objects of study. In this paper, we provide a set of moves that can be performed on virtual mosaics that preserve knot and link type, we show that any virtual knot or link can be represented as a virtual mosaic, and we provide several computational results related to virtual mosaic numbers for small classical and virtual knots.
\end{abstract}

\section{Introduction}

\subsection{Virtual knot theory}

Introduced by Kauffman in \cite{vkt}, virtual knots can be viewed in at least three different ways: as knot diagrams with an additional crossing type (called a virtual crossing), as Gauss codes, or as knot diagrams on surfaces \cite{cks, abstract, kuperberg}. When viewed as knot diagrams with virtual crossings, a set of virtual Reidemeister moves (or, equivalently, the virtual detour move shown in Figure \ref{detour})
\begin{figure}[htp]
\begin{center}
\includegraphics[width=.8\textwidth]{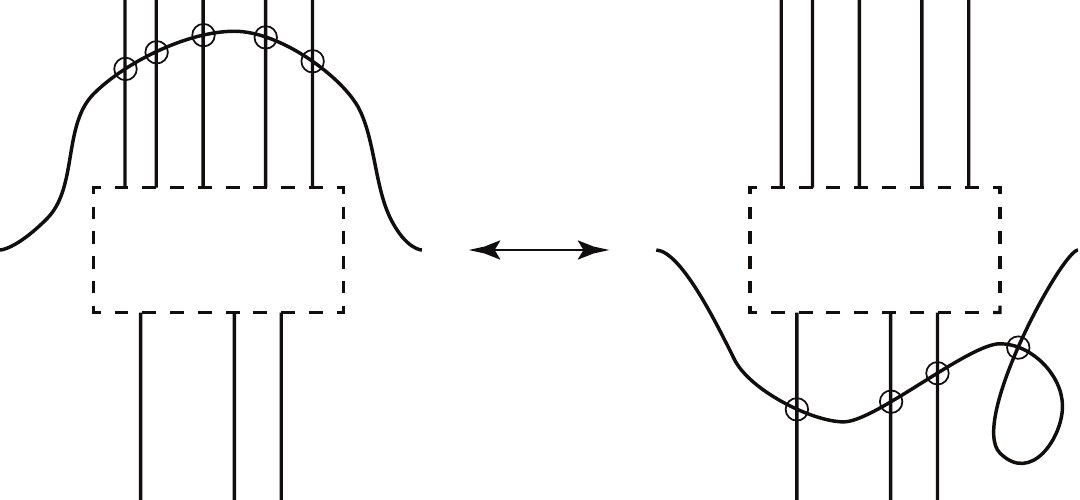}
\end{center}
\caption{The virtual detour move.}\label{detour}
\end{figure}
describes virtual knot equivalence, while Gauss code versions of the ordinary Reidemeister moves define Gauss code equivalence. If virtual knots are viewed as knot diagrams on surfaces, we may perform ordinary Reidemeister moves on these surfaces without changing the virtual knot type, but virtual knot equivalence might also involve changing the surface on which the knot diagram lives, i.e. ``(de)stabilizing." 

\begin{figure}[htp]
\begin{center}
\includegraphics[width=.8\textwidth]{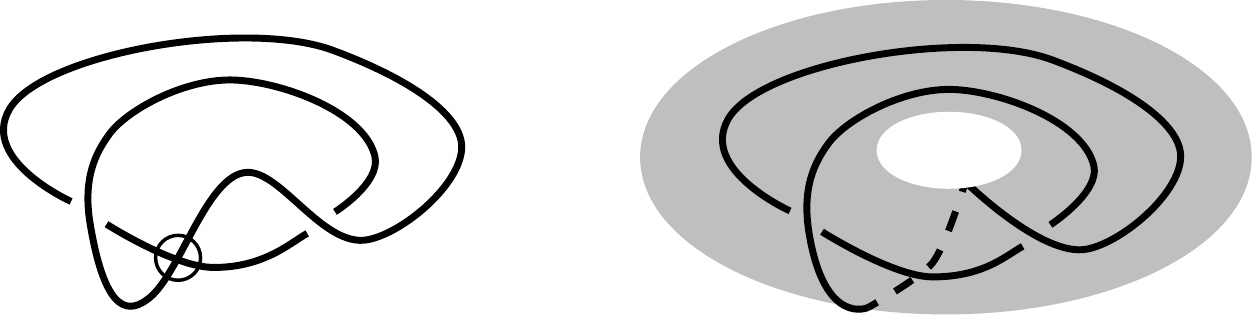}
\end{center}
\caption{The virtual knot, commonly called the \emph{virtual trefoil}, that is given by Gauss code $O1+U2+U1+O2+$.}\label{virtual_ex}
\end{figure}

% Virtual knot theory is not only interesting to study in its own right, as a natural generalization of classical knot theory, but the results of this theory have many applications to classical knot theory as well as other categories of knot-like objects. For instance, virtual knots can be used to study finite-type invariants of classical knots \cite{gpv} and to provide new bounds on slice genus for classical knots \cite{bcg, boden, dkk}, 
% while biquandle virtual brackets are powerful invariants of classical knots defined using skein relations involving virtual crossings \cite{quandle}. Virtual knot theory can also be used to provide a whole host of invariants of knotoids \cite{nesli-lou}.

We will return to virtual knots shortly, but first, we introduce mosaic knots and the objects we aim to study: mosaic representations of virtual knots.

\subsection{Mosaic knots}

In \cite{lomkau}, planar mosaic diagrams were introduced for classical knots as building blocks for developing a quantum knot system. These diagrams are defined to be $n\times n$ grids of \emph{suitably connected} tiles, where each tile is one of the 11 pictured in Figure \ref{tiles}, and suitable connectivity is illustrated in Figure \ref{connect}. 

\begin{figure}[htp]
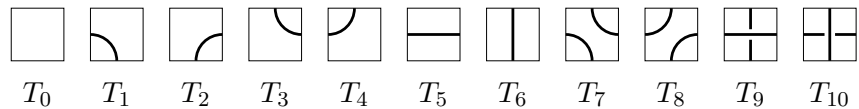

\centering
\setcounter{MaxMatrixCols}{20}$\begin{matrix} % Uses mathtools package. Define \tilefit in preamble.
    \tile[\tilefit]{\tileo} & \tile[\tilefit]{\tilei} & \tile[\tilefit]{\tileii} & \tile[\tilefit]{\tileiii} & \tile[\tilefit]{\tileiv} & \tile[\tilefit]{\tilev} & \tile[\tilefit]{\tilevi} & \tile[\tilefit]{\tilevii} & \tile[\tilefit]{\tileviii} & \tile[\tilefit]{\tileix} & \tile[\tilefit]{\tilex} \\[2.5ex]
    T_0 & T_1 & T_2 & T_3 & T_4 & T_5 & T_6 & T_7 & T_8 & T_9 & T_{10}
\end{matrix}$
\caption{The eleven standard mosaic tiles.}
\label{tiles}
\end{figure}

\begin{figure}[htp]
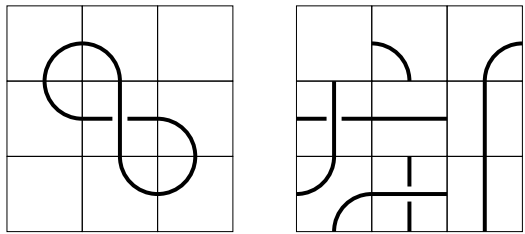

\centering
\begin{tabular}{ccc}
\begin{mosaic}[1]
    \tileii \& \tilei \& \tileo\\ 
    \tileiii \& \tilex \& \tilei\\
    \tileo \& \tileiii \& \tileiv \\
\end{mosaic} &&
\begin{mosaic}[1]
    \tileo \& \tilei \& \tileii\\ 
    \tilex \& \tilev \& \tilevi\\
    \tileviii \& \tileix \& \tilevi \\
\end{mosaic}
\end{tabular}
\caption{A suitably connected knot mosaic (left) and one that is not suitably connected (right).}
\label{connect}
\end{figure}

Many interesting questions related to mosaic knots concern the realizability of knots on mosaics. Lomanoco and Kauffman showed that any knot can be realized on a mosaic \cite{lomkau}, while Kuriya and Shehab proved a more general result: tame knot theory is equivalent to mosaic knot theory \cite{kuriya}. Much of the focus of research on  mosaic knots since then has related to finding the \emph{mosaic number} (i.e., the smallest integer $n$ for which
$K$ is representable as a mosaic knot on an $n\times n$ grid) of specific knots and knot families \cite{lee, involve, ludwig}.

Some variations of classical mosaics have also been introduced and studied. Gardu\~no introduced mosaics for virtual knots, where the collection of mosaic tiles used to create mosaics included an additional virtual crossing tile \cite{garduno}:

\smallskip

\begin{center}\setcounter{MaxMatrixCols}{1}$\begin{matrix} % Uses mathtools package. Define \tilefit in preamble.
    \tile[\tilefit]{\tilec} 
\end{matrix}$\end{center}

\smallskip

\noindent Building on Gardu\~no's work, results in \cite{dye} give bounds relating mosaic number and crossing number for these virtual mosaic knots. 

Another variation on mosaic knots was introduced by Carlisle and Laufer, who studied \emph{toroidal mosaic knots} in \cite{torus}. Toroidal mosaic knots are defined by identifying opposite edges of the $n\times n$ mosaic grid.

%We take inspiration from these variations on mosaic knot theory to provide a new method for representing virtual knots.

\subsection{Virtual Mosaics}

In this work, we represent virtual knots and links not via their virtual diagrams, but as knot diagrams on orientable surfaces.

%%%%%%%%%%%%%%%
\begin{defn}
A \emph{virtual $n$-mosaic} or \emph{virtual mosaic} (if $n$ is unspecified) is an $n\times n$ array of standard mosaic tiles, together with an identification of the $4n$ edges of the array boundary, that forms a knot or link diagram on a closed, orientable surface. We say the (classical or virtual) knot or link $L$ is \emph{represented} by the virtual mosaic, and the genus of the surface will also be called the genus of the virtual mosaic.
\end{defn}

As a first example, consider the virtual 2-mosaics pictured in Figure \ref{diff_genus_vtref}. It is easy to obtain a Gauss code for the knots represented by each mosaic. We then see that the two mosaics represent the same knot, namely the virtual trefoil, shown in Figure \ref{virtual_ex}. Note that the mosaic on the left has genus 1 while the mosaic on the right has genus 2. 

\begin{figure}[htp]
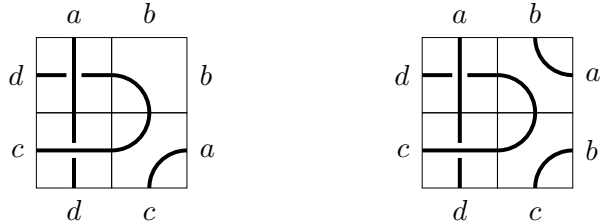

\centering
\begin{vmosaic}[1]{2}{2}{{a,b}}{{b,a}}{{c,d}}{{c,d}}
\tilex \& \tilei \\
\tileix \& \tileviii \\
\end{vmosaic}\hspace{2cm}
\begin{vmosaic}[1]{2}{2}{{a,b}}{{a,b}}{{c,d}}{{c,d}}
\tilex \& \tilevii \\
\tileix \& \tileviii \\
\end{vmosaic}
\caption{Two representations of the virtual trefoil.}
\label{diff_genus_vtref}
\end{figure}

We can obtain a virtual knot or link diagram associated to a certain virtual mosaic by drawing arcs connecting endpoints that lie on boundary components sharing the same label. If any crossings occur outside of the mosaic between connecting arcs, these crossings are said to be virtual. See Figure \ref{virt-knot} for an example. Note that the virtual knot or link type of the result is independent of how we draw connecting arcs since all possible arcs drawn in this way are related by the virtual detour move. 

\begin{figure}[htp]
\centering
\begin{vmosaic}[1]{2}{2}{{a,b}}{{b,a}}{{c,d}}{{c,d}}
\tilex \& \tilei \\
\tileix \& \tileviii \\
\end{vmosaic}\meq[1em]{\longrightarrow}\hspace{-2.3cm}
\begin{tikzpicture}[baseline]
\begin{mosaic}
\tilex \& \tilei \\
\tileix \& \tileviii \\
\end{mosaic}
\draw[line width=\knotthickness] (-1.5,1) to[out=90, in=135, looseness=.8] (.25,1.25) to[out=-45, in=0, looseness=.8] (0,-.5);
\draw[line width=\knotthickness] (-.5,-1) to[out=-90, in=-90] (-2.25,-1.25) to[out=90, in=180] (-2,-.5);
\draw[line width=\knotthickness] (-1.5,-1) to[out=-90, in=0] (-2.25,-1.25) to[out=180, in=180] (-2,.5);
\draw[thick] (-2.25,-1.25) circle (1.4mm);
\end{tikzpicture}
\caption{A virtual knot diagram obtained from the virtual mosaic pictured in Figure \ref{diff_genus_vtref} (L).}
\label{virt-knot}
\end{figure}
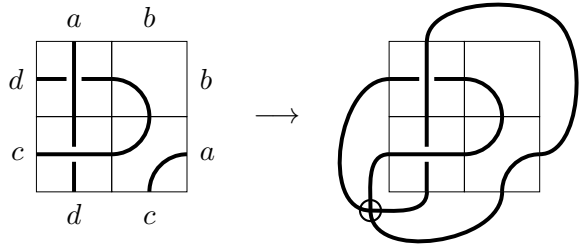

One key virtual knot invariant we wish to study in this paper is called the virtual mosaic number, defined as follows.

\begin{defn}
The \emph{virtual mosaic number} of $L$, denoted $\vmn{L}$, is the smallest integer $n$ for which $L$ can be represented by a virtual $n$-mosaic.
\end{defn}

In Section \ref{sec:ex}, we determine virtual mosaic numbers for small-crossing classical and virtual knots. Since we aim to represent classical knots on genus 0 surfaces, it will be helpful for us to recall the following standard result.

\begin{prop}
The genus 0 virtual mosaics correspond to identifications in which all label pairs are nested, i.e., for all edge labels $x$ and $y$, between two $x$-labels there are either zero or two $y$-labels.
\end{prop}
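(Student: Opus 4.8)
The plan is to turn the statement into a single Euler-characteristic computation and then treat the two implications separately. First I would fix a CW structure on the surface $\Sigma$ built from a virtual $n$-mosaic: the $n\times n$ array is just one disk whose boundary is a $4n$-gon, so after the boundary identifications $\Sigma$ has exactly one $2$-cell, $E=2n$ edges (the $4n$ boundary edges glued in pairs), and $V$ vertices (the classes of the $4n$ corners under the identifications). Since $\Sigma$ is closed and orientable, $\chi(\Sigma)=V-2n+1=2-2g$, so $\Sigma$ has genus $0$ if and only if $V$ attains its maximum possible value $2n+1$. The proposition is therefore equivalent to the combinatorial claim that the number of corner classes equals $2n+1$ exactly when the identification, viewed as a chord diagram on the $4n$ boundary points, is \emph{non-crossing} --- which is precisely the ``nested'' condition, since two labels $x,y$ interleave iff exactly one $y$ lies strictly between the two $x$'s.

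For the direction ``nested $\Rightarrow$ genus $0$,'' I would prove the more general fact that a non-crossing pairing of a $2m$-gon produces $V=m+1$ corner classes, by induction on $m$, and then specialize to $m=2n$. A non-crossing pairing always contains an \emph{innermost} pair: two boundary edges that are adjacent and identified with each other. Folding this pair is the standard $aa^{-1}$ cancellation; it identifies the two outer corners, leaves the tip corner as a class of its own, and leaves a non-crossing pairing of a $2(m-1)$-gon whose remaining corner classes are unchanged. Hence $V_m=V_{m-1}+1$, and the base case (the bigon $aa^{-1}$, $V_1=2$) gives $V_m=m+1$, so $V_{2n}=2n+1$ and $g=0$ by the Euler count above.

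For the converse I would prove the contrapositive, ``an interleaving forces genus $\ge 1$,'' topologically. Suppose labels $a$ and $b$ interleave around the boundary. Draw the straight chords $\hat a,\hat b$ inside the disk joining the midpoints of the two $a$-edges and of the two $b$-edges; interleaving of the endpoints means $\hat a$ and $\hat b$ cross in exactly one interior point, transversally. Because the identifications glue the two $a$-edges together (and likewise the two $b$-edges), in $\Sigma$ each chord descends to an embedded closed loop, $\gamma_a$ and $\gamma_b$, and these loops still meet in exactly that one transverse point (the four midpoints are distinct and become two distinct points, so the boundary identifications add no further intersections). Since $H_1(\Sigma;\mathbb{F}_2)=0$ when $\Sigma$ is a sphere, the mod-$2$ intersection number of any two closed curves is $0$; but $\gamma_a$ and $\gamma_b$ meet in an odd number of points. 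Hence $\Sigma$ cannot have genus $0$.

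I expect this converse to be the main obstacle, because it must rule out the possibility that a single interleaving is somehow ``undone'' by an otherwise complicated identification. The intersection-number argument is what resolves this cleanly: one transverse crossing of the descended loops already obstructs genus $0$, entirely independent of the remaining pairs. The points needing care are the verification that the descended chords are honest embedded loops meeting exactly once transversally, and the bookkeeping in the folding step confirming that each innermost cancellation adds precisely one corner class, so that the nested case saturates the bound $V\le 2n+1$.
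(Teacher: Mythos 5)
The paper does not actually prove this proposition---it is introduced with ``it will be helpful for us to recall the following standard result'' and left as a cited fact---so there is no in-paper argument to compare against. Your proposal supplies a correct, self-contained proof, and both halves check out. The Euler-characteristic bookkeeping is right: with one $2$-cell, $2n$ edges, and $V$ corner classes, $\chi = V - 2n + 1 = 2-2g$, so genus $0$ is equivalent to $V = 2n+1$; the innermost-pair folding argument (which needs the standard fact that a non-crossing matching on a circle always contains an arc joining adjacent points, and the observation that orientability forces the adjacent pair to be glued as $aa^{-1}$ rather than as a cross-cap) correctly gives $V_m = V_{m-1}+1$ and hence saturation of the bound in the nested case. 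The converse via the mod-$2$ intersection pairing is the cleanest available route: the two chords descend to embedded loops meeting in a single transverse point because the gluings are isometries matching midpoint to midpoint (an assumption worth stating explicitly, though it is forced by the requirement that arc endpoints of the tiles connect across identified edges), and $H_1(S^2;\mathbb{F}_2)=0$ rules out odd intersection number on the sphere. The only cosmetic remark is that your two directions use different mechanisms (combinatorial vertex-counting one way, algebraic topology the other); one could instead push the Euler count both ways by showing a crossing pair strictly decreases $V$, but your hybrid is shorter and the intersection-number step is exactly the right tool for the harder implication.
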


We now turn our focus to studying examples of virtual mosaics.

\section{Examples}\label{sec:ex}

\subsection{The simplest classical knots and links}

We begin by observing that the unknot and 2-component unlink both have virtual mosaic number 1. Both can be drawn as $1\times 1$ virtual mosaics with genus 0. See Figure \ref{unknot-unlink}.

\begin{figure}[htp]
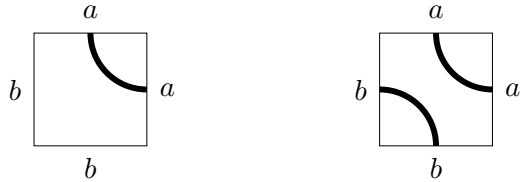

\centering
\begin{vmosaic}[1.5]{1}{1}{{a}}{{a}}{{b}}{{b}} % unknot
\tileiii \\
\end{vmosaic}\hspace{2cm}
\begin{vmosaic}[1.5]{1}{1}{{a}}{{a}}{{b}}{{b}} % 2-unlink
\tilevii \\
\end{vmosaic}
\caption{Unknot and 2-component unlink, both genus 0.}
\label{unknot-unlink}
\end{figure}

On a $1\times 1$ virtual mosaic, there can be at most one crossing and at most two components. Thus, only one other link could have virtual mosaic number 1, namely the virtual Hopf link (Figure \ref{virtualhopf}). 

Which knots have virtual mosaic number 2? Since there are only four tiles available for crossings, we need only consider knots that have crossing number four or less. As we will soon see, all 2- and 3-crossing classical and virtual knots have virtual mosaic number 2. But while many 4-crossing virtual knots can be represented as virtual 2-mosaics, the classical figure-8 knot cannot. To show this, we begin with a lemma regarding the Gauss code of the figure-8 knot, knot $4_1$.

\begin{lem}\label{gauss-fig-8}
A Gauss code for the classical figure-8 knot with four crossings must contain a sequence of four distinct consecutive crossings. 
\end{lem}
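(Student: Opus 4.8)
The plan is to treat a Gauss code of the figure-8 as a cyclic word of length $8$ on the symbols $1,2,3,4$, each appearing exactly twice (together with over/under and sign data that will turn out to be irrelevant). In these terms the conclusion says: there is a cyclic window of four consecutive positions whose labels are all distinct, equivalently a window containing no crossing twice. First I would observe that the existence of such a window is invariant under all the operations relating different Gauss codes of the same knot: relabeling the crossings, changing the basepoint (cyclic rotation), reversing orientation, passing to the mirror image (swapping all over/under marks), and changing signs. None of these affects the pairing of the eight positions into chords, so it suffices to understand that chord diagram up to symmetry.

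Next I would pin down the chord diagram. Since a four-crossing diagram of $4_1$ is minimal, it is reduced and alternating; alternation forces the over- and under-marks to alternate around the code, so the two occurrences of each crossing sit at positions of opposite parity and are therefore an odd cyclic distance apart, i.e. the shorter arc between them has length $1$ or $3$. A chord of shorter-arc length $1$ is a pair of adjacent occurrences, which is precisely a nugatory (Reidemeister~I) crossing; as the diagram is reduced, there are none. Hence every one of the four chords has shorter-arc length exactly $3$. Equivalently, the interlacement graph of $4_1$ is the $4$-cycle; in practice one may instead just record the standard code $O1\,U2\,O3\,U4\,O2\,U1\,O4\,U3$ and read the structure off directly.

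Finally I would run a short covering count over the eight cyclic windows of length four. A chord whose endpoints are at shorter-arc distance $3$ lies inside exactly one such window, namely the one whose four positions are the two endpoints together with the two positions between them. A window fails to exhibit four distinct labels precisely when it contains both endpoints of some chord. With four chords, each spoiling exactly one window, at most four of the eight windows are spoiled, so at least four windows contain four distinct crossings; rotating the basepoint to one of them turns this into a genuinely consecutive run, proving the lemma. The main obstacle is the structural input in the middle step: rigorously justifying that every four-crossing diagram of the figure-8 is alternating and reduced. I would secure this by appealing to the classification of $4_1$, which admits a unique reduced diagram up to reflection and sphere isotopy (it has no flypes), or, if a heavier tool is acceptable, to the Tait/Kauffman–Murasugi–Thistlethwaite results guaranteeing that minimal diagrams of alternating knots are alternating. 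Everything after that point is elementary.
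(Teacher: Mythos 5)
Your argument is correct, and its combinatorial core (each chord whose endpoints are at cyclic distance $3$ obstructs exactly one of the eight length-four windows, so at least four windows consist of distinct crossings) is sound; but you reach the key structural fact by a genuinely different and much heavier route than the paper. The paper's proof is a direct case analysis on the initial segment of the code using only two elementary facts: a minimal four-crossing code has no adjacent repeated symbol (else an $R_1$ move would contradict $c(4_1)=4$), and a classical knot's Gauss code satisfies Gauss's parity condition (no ``odd'' crossings, i.e.\ the two occurrences of each crossing are separated by an even number of other symbols). It never invokes alternation. Notice that these same two facts already yield exactly your structural conclusion: evenness forces the two occurrences of each crossing to sit at odd positional distance, hence at shorter-arc distance $1$ or $3$, and the no-$R_1$ condition rules out distance $1$ --- so every chord has shorter-arc length exactly $3$. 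You could therefore keep your covering count, which is cleaner and slightly stronger than the paper's case analysis (it shows at least four of the eight cyclic windows work, and applies to any reduced realizable four-crossing knot code), while discarding the appeal to the Kauffman--Murasugi--Thistlethwaite theorems and the classification of minimal diagrams of $4_1$, which is the one genuinely deep (though correctly flagged and citable) input in your write-up. As written, your proof is valid but uses the Jones-polynomial-strength fact that minimal diagrams of an alternating knot are alternating to prove a statement that follows from parity alone.
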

\begin{proof}
Without loss of generality, suppose the sequence begins \verb`12`. (Otherwise an $R1$ move would remove the crossing.) If the next crossing is \verb`1` then \verb`1` would be an odd crossing; if the next crossing is \verb`2`, there would be an $R1$ move. Thus the sequence begins \verb`123`. If the next crossing is \verb`4` then we have our desired sequence. Otherwise, then next crossing must be \verb`1` (to avoid odd crossings and $R1$ moves). Following \verb`1231` must be \verb`4`, providing the desired sequence. (A sequence beginning \verb`12312` must result in an odd crossing or $R1$ move.)
\end{proof}

\begin{prop}
The figure-8 knot has virtual mosaic number 3.
\end{prop}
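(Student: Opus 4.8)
The plan is to prove the two inequalities $\vmn{4_1}\le 3$ and $\vmn{4_1}\ge 3$ separately. For the upper bound I would exhibit an explicit genus~$0$ virtual $3$-mosaic, choosing its nine tiles and a nested identification of the twelve boundary edges (nested, so that the Proposition guarantees genus~$0$) so that tracing the strand through the grid and across the glued arcs yields a four-crossing diagram; reading off its Gauss code and comparing with the figure-8 code confirms the knot is $4_1$. This direction is routine once a suitable mosaic is displayed.

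The content is the lower bound. Since $4_1$ has crossing number $4$ and a $2\times 2$ array has only four cells, any representation must place a crossing tile ($T_9$ or $T_{10}$) in \emph{every} cell. Both crossing tiles join opposite edges of a cell by straight strands, so the underlying shadow is completely rigid: it consists of the two horizontal strands (the rows) and the two vertical strands (the columns), meeting in the four crossings. Writing $A,B$ for the crossings in the top row and $C,D$ for the bottom row, so that the left column passes through $A,C$ and the right column through $B,D$, the only remaining freedom is the over/under choice at each crossing and the pairing of the eight boundary edge-endpoints. For the result to be a single component the identification must splice the four strands into one closed curve, and then the Gauss code is the concatenation, in traversal order, of the four crossing-pairs $\{A,B\}$, $\{C,D\}$, $\{A,C\}$, $\{B,D\}$, each strand contributing its two crossings as two consecutive symbols.

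By Lemma~\ref{gauss-fig-8} a four-crossing figure-8 code must contain four distinct consecutive crossings, which in this block-of-two structure forces two strands with disjoint crossing sets (the two rows, or the two columns) to be traversed consecutively, or else a ``$1,2,1$'' straddle of three strands. I would then run through these finitely many configurations, cut down using the dihedral symmetry of the square and the mirror symmetry exchanging $T_9$ and $T_{10}$, and eliminate each with two tests. First, \emph{reducedness}: gluing two strands that share a crossing at that crossing's end makes the shared symbol repeat in adjacent positions, an $R1$ kink that would drop the crossing number below four. Second, \emph{classicality}: because $4_1$ is classical, its Gauss code must satisfy the Gauss parity condition that an even number of symbols lies between the two occurrences of every crossing. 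The assertion to verify is that every single-component configuration exhibiting four distinct consecutive crossings either contains a kink or violates the parity condition, hence cannot be the reduced classical code of $4_1$.

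The main obstacle is precisely this case analysis, and the subtlety is that Lemma~\ref{gauss-fig-8} is not by itself decisive: four distinct consecutive crossings genuinely do occur, for instance in the code $ABCDACBD$ obtained by traversing the two rows and then the two columns, but there crossing $A$ has three symbols between its occurrences, so that code is non-classical. Organizing the bookkeeping so that the parity failure (when there is no kink) and the kink (when parity could otherwise be salvaged) together cover all cases is the delicate part; I expect the cleanest organization is to track, for each cyclic traversal order of the four strands, the end at which consecutive strands are glued. As an alternative packaging one could argue through the interlacement graph: $4_1$ is the unique four-crossing knot whose chords pairwise interlace exactly according to a $4$-cycle, and one would check that this $4$-cycle is never the interlacement graph of a single-component identification of the rigid $2\times 2$ shadow.
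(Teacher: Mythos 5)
Your upper bound and the overall shape of your lower bound (every cell of a $2\times 2$ mosaic must be a crossing tile, the shadow is the rigid row/column grid, and the Gauss code is a cyclic concatenation of the four two-crossing blocks) agree with the paper, which then leans on the fact that $4_1$ is alternating to force the checkerboard tile pattern and, step by step, the edge labels. The genuine gap in your plan is the claimed dichotomy that every surviving configuration ``either contains a kink or violates the parity condition.'' That assertion is false. Take the cyclic traversal order $R_1, C_1, R_2, C_2$ (the two rows opposite one another), reading row~1 left-to-right, column~1 bottom-to-top, row~2 right-to-left and column~2 top-to-bottom; concretely, identify the east edge of row~1 with the south edge of column~1, the north edge of column~1 with the east edge of row~2, the west edge of row~2 with the north edge of column~2, and the south edge of column~2 with the west edge of row~1. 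With $A,B$ the top crossings and $C,D$ the bottom ones, the cyclic Gauss word is $ABCADCBD$: it contains four distinct consecutive symbols ($BCAD$, exactly your ``$1,2,1$'' straddle case), has no cyclically adjacent repeat, and every chord has an even number of symbols between its two occurrences. Worse, its chord diagram is precisely that of the figure-8 (the pairing $\{1,4\},\{2,7\},\{3,6\},\{5,8\}$, whose interlacement graph is the $4$-cycle), so your alternative interlacement-graph packaging fails on the same example; and taking the checkerboard tiles $T_9,T_{10}$ over $T_{10},T_9$ even reproduces the figure-8's alternating over/under sequence along the code.

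What actually eliminates this configuration is the crossing \emph{signs}: with the orientations above all four crossings receive the same sign, so the diagram has writhe $\pm 4$, whereas a $4$-crossing diagram of $4_1$ would have to be a reduced alternating minimal diagram and hence, by Tait, have writhe $0$ --- or one can simply compute an invariant (e.g.\ a Jones polynomial) of the virtual knot determined by this signed Gauss code and see that it is not $4_1$. So your two tests must be supplemented by a sign-sensitive criterion; evenness, kink-detection, and even full unsigned realizability cannot distinguish this mosaic from the figure-8. You are right that the straddle case is the delicate one (the paper's own argument tacitly assumes the run of four distinct crossings starts at a boundary edge, and it is exactly the straddle that produces the troublesome configuration), but the tools you propose do not dispose of it.
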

\begin{proof}
We show that the figure-8 cannot be drawn on a $2\times 2$ mosaic with any genus. Since the figure-8 knot is alternating and has 4 classical crossings, its unlabeled mosaic must have the form shown in Figure \ref{fig8}(i).

\begin{figure}[htp]
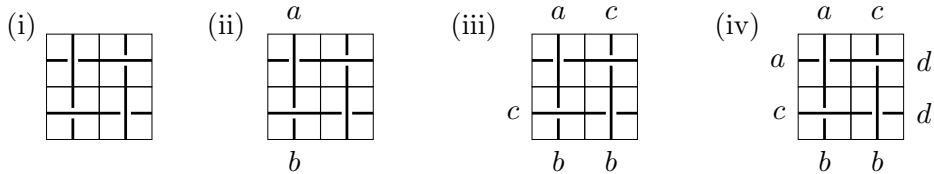

\centering
\raisebox{7mm}{(i) }\begin{mosaic}[.7]
\tilex \& \tileix \\
\tileix \& \tilex \\
\end{mosaic}\hspace{6mm}
\raisebox{7mm}{(ii)}\begin{vmosaic}[.7]{2}{2}{{a,}}{{,}}{{,b}}{{,}}
\tilex \& \tileix \\
\tileix \& \tilex \\
\end{vmosaic}\hspace{6mm}
\raisebox{7mm}{(iii)}\begin{vmosaic}[.7]{2}{2}{{a,c}}{{,}}{{b,b}}{{c,}}
\tilex \& \tileix \\
\tileix \& \tilex \\
\end{vmosaic}\hspace{6mm}
\raisebox{7mm}{(iv)}\begin{vmosaic}[.7]{2}{2}{{a,c}}{{d,d}}{{b,b}}{{c,a}}
\tilex \& \tileix \\
\tileix \& \tilex \\
\end{vmosaic}
\caption{Labeling an alternating $2\times 2$ virtual mosaic.}
\label{fig8}
\end{figure}

By Lemma \ref{gauss-fig-8}, a Gauss code for the figure-8 with four crossings must contain an alternating sequence \verb`1234`. Without loss of generality, suppose this sequence begins at the left north edge. If we label that edge $a$, then the left south edge must have a different label (otherwise we would have a link). See Figure \ref{fig8}(ii).

To ensure the knot passes through the sequence \verb`1234`, the other $b$-label must be the right south edge. (The right north edge would create a nonalternating 4-crossing knot.) Label the right north edge $c$. To keep the knot alternating, the other $c$-label must be the lower west edge, as in Figure \ref{fig8}(iii). (Note that the upper east edge would create an $R1$ move, resulting in a knot with fewer than four crossings.)

Continuing with alternating crossings, the remaining labels must produce the virtual mosaic shown in Figure \ref{fig8}(iv), which is the trefoil (with an additional $R1$ move at the adjacent $a$ edges), not the figure-8 knot.

As we can see in Figure \ref{trefoilfigure8}, the figure-8 knot, $4_1$, can be drawn on a $3\times 3$ virtual mosaic with genus 0. Hence, $m_v(4_1)=3$.
\end{proof}

\begin{figure}[htp]
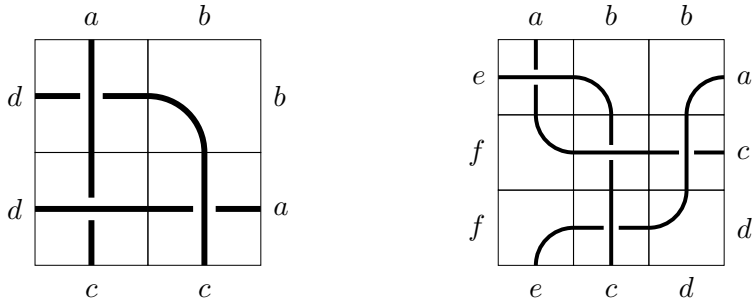

\centering
\begin{vmosaic}[1.5]{2}{2}{{a,b}}{{b,a}}{{c,c}}{{d,d}} % classical trefoil
\tilex \& \tilei \\
\tileix \& \tilex \\
\end{vmosaic}\hspace{2cm}
\begin{vmosaic}[1]{3}{3}{{a,b,b}}{{a,c,d}}{{d,c,e}}{{f,f,e}} % classical figure-8
\tileix \& \tilei \& \tileii \\
\tileiii \& \tileix \& \tilex \\
\tileii \& \tilex \& \tileiv \\
\end{vmosaic}
\caption{The classical trefoil and figure-8 knots, both genus 0.}
\label{trefoilfigure8}
\end{figure}

In Figure \ref{trefoilfigure8}, along with a representation of the figure-8 knot on a virtual 3-mosaic, we see an example demonstrating that the virtual mosaic number of the trefoil is 2. Since we've determined the virtual mosaic numbers for all (three) classical knots with four or fewer crossings, let's turn our attention to classical knots with five or more crossings.

\subsection{Classical knots with five or more crossings}

Classical knots with 5, 6 and 7 crossings can be realized on the smallest mosaics necessary to contain their crossing tiles. 

\begin{prop}
All classical 5- 6- and 7-crossing knots have virtual mosaic number 3.
\end{prop}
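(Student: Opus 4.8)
The plan is to prove the two inequalities $m_v \geq 3$ and $m_v \leq 3$ separately. For the lower bound I would argue by counting crossings: the only tiles that carry a crossing are $T_9$ and $T_{10}$, and each contributes exactly one crossing to the diagram, so a $2\times 2$ virtual mosaic---having only four tiles in all---represents a diagram with at most four classical crossings. A prime knot with five, six, or seven crossings has crossing number at least five, and since the classical crossing number cannot be lowered by passing to a diagram on a surface, no such knot can be represented on a $2\times 2$ mosaic. Hence $m_v \geq 3$ for every knot under consideration.

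For the upper bound I would exhibit an explicit $3\times 3$ virtual mosaic for each of the twelve knots $5_1, 5_2, 6_1, 6_2, 6_3$, and $7_1, \dots, 7_7$. The nine available tiles leave room for the at most seven crossing tiles together with a handful of connecting tiles, and the crucial gain over ordinary mosaics is that the boundary identifications allow a strand to leave one edge and re-enter another. This removes the need to spend interior tiles on the arc and turnaround tiles that inflate the classical mosaic number, and is exactly what lets a $3\times 3$ grid suffice here.

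To keep the construction systematic rather than purely ad hoc, I would use the fact that every prime knot through seven crossings is rational (2-bridge), so each is the numerator closure of a rational tangle recorded by a short Conway word---for instance $7_7 = 2\,1\,1\,1\,2$ and $7_1 = 7$. Reading each Conway term as a twist region, I would lay horizontal twist regions along rows and vertical ones along columns, using the edge identifications both to close the tangle and, when a twist region is too long for a single length-$3$ row or column (as for the seven half-twists of $7_1 = T(2,7)$), to wrap it around the grid. For each mosaic produced I would extract a Gauss code and compare it against the known code of the target knot to certify the knot type, checking the nested-label condition recalled above whenever a genus-$0$ certificate is wanted.

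The main obstacle is the upper bound, and within it the tight-packing cases: fitting seven crossing tiles into a nine-cell grid while keeping the tiling suitably connected and still producing the intended knot leaves very little slack, with the single-long-twist knots such as $7_1$ the hardest to route. I expect these to require a wrap-around identification and careful bookkeeping of the over/under information at each crossing. Once the twelve diagrams are drawn, the remaining verification via Gauss codes is routine, so the essential work is finding the layouts, not checking them.
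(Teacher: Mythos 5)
Your overall architecture matches the paper's: the lower bound comes from observing that a $2\times 2$ grid has only four tiles, hence at most four crossing tiles ($T_9$ or $T_{10}$), so no knot of crossing number five or more fits on a virtual $2$-mosaic of any genus; the upper bound comes from exhibiting an explicit virtual $3\times 3$ mosaic for each of the twelve knots. (Both you and the paper quietly rely on the nontrivial fact that a classical knot of crossing number $c$ cannot be drawn on a surface with fewer than $c$ classical crossings; you at least state the needed fact explicitly.)

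The genuine problem is in your construction scheme for the upper bound. You propose to realize each knot with exactly its crossing number of crossing tiles, reading the Conway word as twist regions and wrapping a long twist region around the grid via the edge identifications, and you single out $7_1$ as the hardest such case while still expecting this to work. It does not: the paper reports, from an exhaustive analysis of the $7$-crossing configurations on a $3\times 3$ grid, that \emph{no} virtual $3$-mosaic with seven crossings represents $7_1$. The equality $m_v(7_1)=3$ is witnessed only by a non-reduced diagram in which all nine tiles are crossing tiles (a $9$-crossing projection of $7_1$), and $7_3$ likewise requires a non-reduced ($8$-crossing) projection. So the minimal-crossing twist-region layout you describe would dead-end on precisely the cases you flag as tight; the missing idea is that one must allow projections with more crossings than the crossing number and search over those as well. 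Once that is permitted, the remainder of your plan---draw the twelve mosaics and certify each by its Gauss code---is exactly what the paper does.
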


\begin{proof} Classical knots with 5 or more crossings cannot fit on a $2\times 2$ mosaic of any genus, but Figure \ref{knots-51-52} shows that both 5-crossing classical knots can fit on a $3\times 3$ mosaic. Similarly, Figure \ref{knots-61-62-63} provides virtual 3-mosaics for knots $6_1$, $6_2$ and $6_3$. All seven crossing knots are illustrated as genus 0 virtual 3-mosaics in Appendix \ref{minimal-mosaics}.\end{proof}

\begin{figure}[htp]
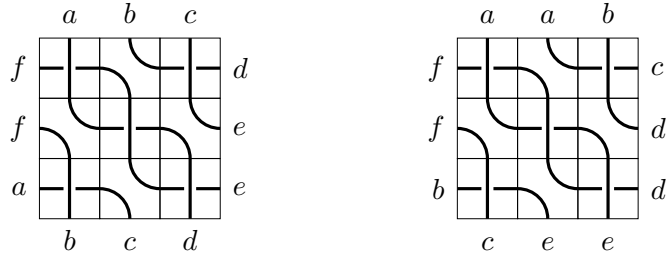

\centering
\begin{vmosaic}[.8]{3}{3}{{a,b,c}}{{d,e,e}}{{d,c,b}}{{a,f,f}} % knot 5.1
\tilex \& \tilevii \& \tilex \\
\tilevii \& \tilex \& \tilevii \\
\tilex \& \tilevii \& \tilex \\
\end{vmosaic}\hspace{2cm}
\begin{vmosaic}[.8]{3}{3}{{a,a,b}}{{c,d,d}}{{e,e,c}}{{b,f,f}} % knot 5.2
\tilex \& \tilevii \& \tilex \\
\tilevii \& \tilex \& \tilevii \\
\tilex \& \tilevii \& \tilex \\
\end{vmosaic}
\caption{Classical knots $5_1$ and $5_2$, both genus 0.}
\label{knots-51-52}
\end{figure}

\begin{figure}[htp]
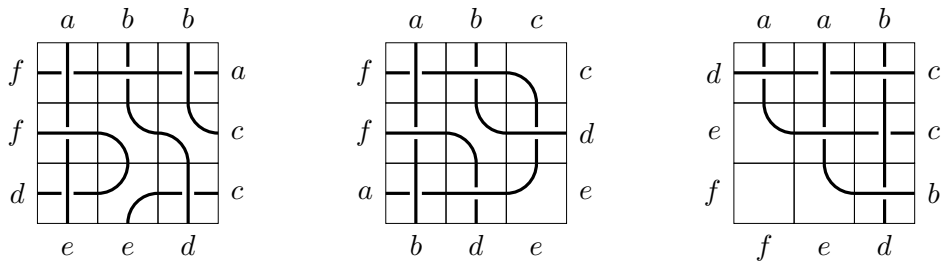

\centering
\begin{vmosaic}[.8]{3}{3}{{a,b,b}}{{a,c,c}}{{d,e,e}}{{d,f,f}} % knot 6.1
\tilex \& \tileix \& \tilex \\
\tileix \& \tilevii \& \tilevii \\
\tilex \& \tileviii \& \tilex \\
\end{vmosaic}\hfill
\begin{vmosaic}[.8]{3}{3}{{a,b,c}}{{c,d,e}}{{e,d,b}}{{a,f,f}} % knot 6.2
\tilex \& \tileix \& \tilei \\
\tileix \& \tilevii \& \tileix \\
\tilex \& \tileix \& \tileiv \\
\end{vmosaic}\hfill
\begin{vmosaic}[.8]{3}{3}{{a,a,b}}{{c,c,b}}{{d,e,f}}{{f,e,d}} % knot 6.3
\tileix \& \tilex \& \tileix \\
\tileiii \& \tileix \& \tilex \\
\tileo \& \tileiii \& \tileix \\
\end{vmosaic}
\caption{Knots $6_1$, $6_2$, $6_3$, all genus 0.}
\label{knots-61-62-63}
\end{figure}

\begin{example} It is possible for the virtual mosaic number of a classical knot to be realized only in a genus 0 virtual mosaic with more crossings than the crossing number of the knot. Knot $7_1$ is an example. By an exhaustive analysis of all genus 0 edge identifications for all 7-crossing configurations on a $3\times 3$ grid, we observed that there is no virtual 3-mosaic with 7 crossings that represents knot $7_1$. Yet, the virtual mosaic number of $7_1$ is 3, since there is a 9-crossing virtual 3-mosaic that represents the knot, pictured in Figure \ref{71-4-mosaic}.\end{example}

\begin{figure}[htp]
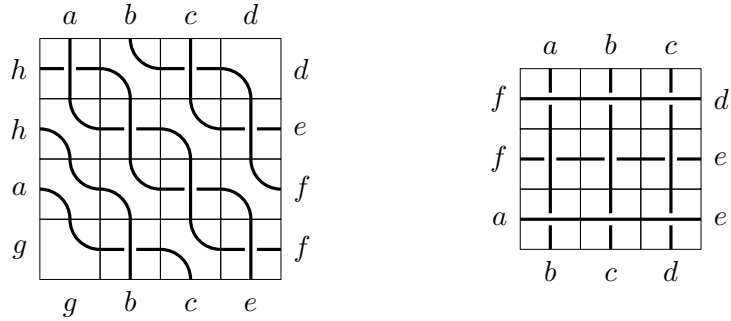

\centering
\begin{vmosaic}[.8]{4}{4}{{a,b,c,d}}{{d,e,f,f}}{{e,c,b,g}}{{g,a,h,h}} % knot 7.1
\tilex \& \tilevii \& \tilex \& \tilei \\
\tilevii \& \tilex \& \tilevii \& \tilex \\
\tilevii \& \tilevii \& \tilex \& \tilevii \\
\tileiii \& \tilex \& \tilevii \& \tilex \\
\end{vmosaic}\hspace{2cm}
\begin{vmosaic}[.8]{3}{3}{{a,b,c}}{{d,e,e}}{{d,c,b}}{{a,f,f}} % knot 7.1
\tileix \& \tileix \& \tileix \\
\tilex \& \tilex \& \tilex \\
\tileix \& \tileix \& \tileix \\
\end{vmosaic}
\caption{Knot $7_1$ as a 7-crossing 4-mosaic (left) and as a 9-crossing 3-mosaic (right), both with genus 0.}
\label{71-4-mosaic}
\end{figure}

A computer search reveals that the only 8-crossing classical knots with virtual mosaic number 3 are: $8_5$, $8_7$, $8_8$, $8_{10}$, $8_{12}$, $8_{13}$, $ 8_{14}$, $8_{15}$, $8_{19}$, $8_{20}$, and $8_{21}$. The remaining 8-crossing classical knots have virtual mosaic number 4. See Appendix \ref{minimal-mosaics}. 

Furthermore, of the 9-crossing knots, only $9_{16}$, $9_{23}$, and $9_{31}$ have virtual mosaic number 3. See Figure \ref{9cross}. Notice that all three of these 9-crossing knots are alternating. Interestingly, no genus 0 virtual mosaics with non-alternating crossing patterns on a $3\times 3$ grid represent 9-crossing knots. We suspect, but have not proven, that the remaining 9-crossing classical knots all have virtual mosaic number 4. 

\begin{figure}[htp]
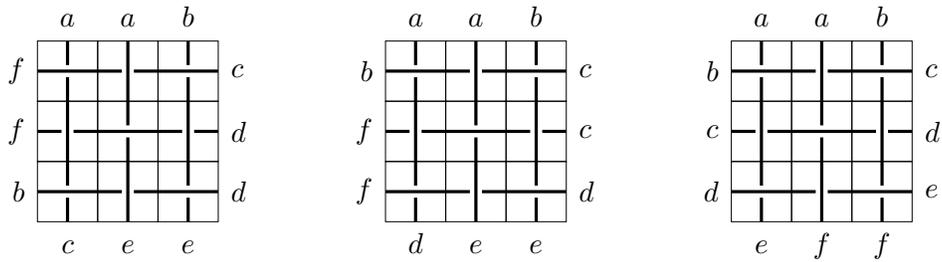

\centering
\begin{vmosaic}[.8]{3}{3}{{a,a,b}}{{c,d,d}}{{e,e,c}}{{b,f,f}} % knot 9_16
\tileix \& \tilex \& \tileix \\
\tilex \& \tileix \& \tilex \\
\tileix \& \tilex \& \tileix \\
\end{vmosaic}\hfill%
\begin{vmosaic}[.8]{3}{3}{{a,a,b}}{{c,c,d}}{{e,e,d}}{{f,f,b}} % knot 9_23
\tileix \& \tilex \& \tileix \\
\tilex \& \tileix \& \tilex \\
\tileix \& \tilex \& \tileix \\
\end{vmosaic}\hfill%
\begin{vmosaic}[.8]{3}{3}{{a,a,b}}{{c,d,e}}{{f,f,e}}{{d,c,b}} % knot 9_31
\tileix \& \tilex \& \tileix \\
\tilex \& \tileix \& \tilex \\
\tileix \& \tilex \& \tileix \\
\end{vmosaic}
\caption{Knots $9_{16}$, $9_{23}$, $9_{31}$.}
\label{9cross}
\end{figure}

\subsection{Small crossing virtual knots and links}

 In addition to determining virtual mosaic numbers for classical knots, we may determine virtual mosaic numbers for many of the virtual knots on Green's virtual knot table \cite{green}. For instance, all virtual 2-crossing and 3-crossing knots have virtual mosaic number 2, as illustrated in Figures \ref{virtualhopf}, \ref{virtual3_1-etc}, and \ref{virtual3_4-etc}. (Note that virtual knot 3.6 is the classical trefoil.)

\begin{figure}[htp]
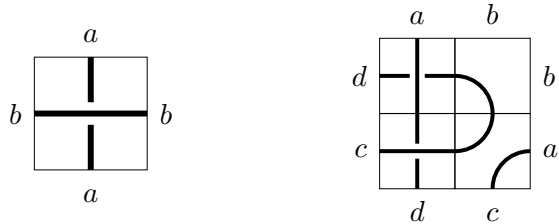

\centering
\begin{vmosaic}[1.5]{1}{1}{{a}}{{b}}{{a}}{{b}} % virtual hopf link
\tileix \\
\end{vmosaic}\hspace{2cm}
\begin{vmosaic}[1]{2}{2}{{a,b}}{{b,a}}{{c,d}}{{c,d}} % virtual trefoil
\tilex \& \tilei \\
\tileix \& \tileviii \\
\end{vmosaic}
\caption{Virtual Hopf link and virtual trefoil, both genus 1.}
\label{virtualhopf}
\end{figure}
%%%%%%%%%%%%%%%%%
\begin{figure}[htp]
\centering
\begin{vmosaic}[1]{2}{2}{{a,b}}{{c,d}}{{a,b}}{{c,d}}
\tilex \& \tileviii \\
\tilex \& \tileix \\
\end{vmosaic}\hspace{1cm}
\begin{vmosaic}[1]{2}{2}{{a,b}}{{c,a}}{{d,b}}{{c,d}}
\tilex \& \tileviii \\
\tilex \& \tilex \\
\end{vmosaic}\hspace{1cm}
\begin{vmosaic}[1]{2}{2}{{a,b}}{{c,d}}{{a,b}}{{c,d}}
\tilex \& \tileviii \\
\tilex \& \tilex \\
\end{vmosaic}
\caption{Virtual knots 3.1, 3.2, 3.3; genera 2, 1, 2, respectively.}
\label{virtual3_1-etc}
\end{figure}
%%%%%%%%%%%%%%
\begin{figure}[htp]
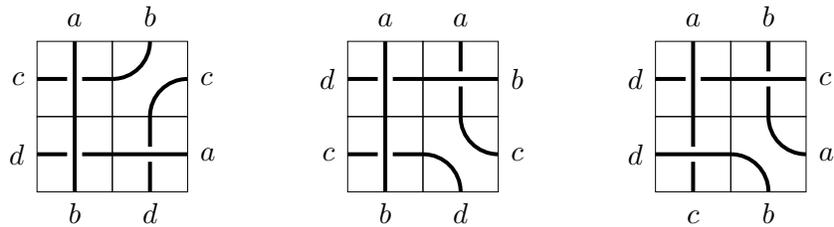

\centering
\begin{vmosaic}[1]{2}{2}{{a,b}}{{c,a}}{{d,b}}{{d,c}}
\tilex \& \tileviii \\
\tilex \& \tileix \\
\end{vmosaic}\hspace{1cm}
\begin{vmosaic}[1]{2}{2}{{a,a}}{{b,c}}{{d,b}}{{c,d}}
\tilex \& \tileix \\
\tilex \& \tilevii \\
\end{vmosaic}\hspace{1cm}
\begin{vmosaic}[1]{2}{2}{{a,b}}{{c,a}}{{b,c}}{{d,d}}
\tilex \& \tileix \\
\tileix \& \tilevii \\
\end{vmosaic}
\caption{Virtual knots 3.4, 3.5, 3.7; genera 2, 1, 1, respectively.}
\label{virtual3_4-etc}
\end{figure}
%%%%%%%%%%%%%

Many 4-crossing virtual knots also have virtual mosaic number 2. We conducted an exhaustive search for virtual knots in all virtual 2-mosaics using Miller's Virtual KnotFolio \cite{vknotfolio} Green's virtual knot table \cite{green} (identifying virtual knots using the 2- and  3-cabled Jones polynomials). This search demonstrated that the following 4-crossing virtual knots have virtual mosaic number 2: 4.1, 4.4, 4.8, 4.12, 4.14, 4.21, 4.30, 4.36, 4.37, 4.43, 4.48, 4.55, 4.59, 4.64, 4.65, 4.71, 4.77, 4.92, 4.95, 4.99, 4.104, 4.105. See Appendix \ref{minimal-virtual-mosaics} for virtual mosaic diagrams of all virtual knots $K$ with $\vmn K=2$.

All of the remaining 4-crossing virtual knots have virtual mosaic number at least 3. We suspect, although we have not proven, that these 4-crossing virtual knots have virtual mosaic number exactly equal to 3.

%%%%%%%%%%%%%%%%%%%%%%%%%%%%%%%%%%%%%%%%%%%%%%%%%%%
\section{Relation to Classical Mosaics}

Let $\mn{L}$ denote the (classical) mosaic number of the classical knot or link $L$. We would like to know what the relationship is between $\mn{L}$ and $\vmn{L}$. We have the following result.

\begin{prop}\label{class-virt-rel}
If $L$ is a link or a nontrivial knot, then $\vmn{L}\le\mn{L}-2$.
\end{prop}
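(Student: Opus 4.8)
The plan is to start from a minimal classical mosaic for $L$, i.e.\ an $\mn{L}\times\mn{L}$ grid realizing $L$ on genus $0$, and to show how to ``fold up'' the two outermost layers of tiles into boundary edge identifications, thereby producing a virtual mosaic of size $\mn{L}-2$ representing the same $L$ on genus $0$. The key structural fact I would exploit is that a suitably connected classical mosaic must have its entire boundary row and column of tiles occupied only by blank tiles ($T_0$) and the ``turn/connector'' tiles with no crossings, since crossing tiles $T_9,T_{10}$ and genuinely two-stranded tiles cannot sit against the outer frame of a knot/link diagram without leaving loose ends. So the outer frame of a minimal classical mosaic carries only arcs that route strands around, not essential crossings.

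First I would set up notation: let $M$ be a minimal $\mn{L}\times\mn{L}$ classical mosaic for $L$. I would argue that all crossing tiles lie in the inner $(\mn{L}-2)\times(\mn{L}-2)$ subgrid, and moreover that (by planarity of a classical mosaic) the strands in the outer two layers form a collection of nested arcs connecting endpoints on the inner boundary. Then the core construction: I would delete the outer two rings of tiles and instead record, on the edges of the boundary of the inner $(\mn{L}-2)\times(\mn{L}-2)$ grid, an edge identification that reproduces exactly how those deleted arcs connected the inner endpoints. Because the deleted arcs are planar (nested), the resulting identification is one in which all label pairs are nested, so by the cited Proposition the resulting virtual mosaic has genus $0$ and hence represents the \emph{classical} knot or link $L$, not merely a virtual one. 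This shows $\vmn{L}\le \mn{L}-2$.

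There are two places where I expect to have to do real work rather than routine bookkeeping. The first is justifying the ``$-2$ rather than $-1$'': I would want to verify that in a \emph{minimal} classical mosaic one can always strip \emph{two} full layers, not just one. The intuition is that a minimal mosaic already forces a blank or connector-only outermost layer (Lomonaco--Kauffman type parity/boundary arguments), and then the second layer can only contain connector tiles that turn strands inward, all of whose routing is recoverable by edge gluings; the hypotheses that $L$ is a link or a \emph{nontrivial} knot are exactly what rule out the degenerate small cases (the unknot has $\mn{L}=1$, so $\mn{L}-2$ would be negative and the statement is intentionally excluded). The second, and I think genuinely the main obstacle, is proving that the deleted outer arcs always connect inner endpoints in a \emph{nested}, non-crossing pattern, so that the induced identification is genus $0$. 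This is where I would lean hardest on planarity: any classical mosaic diagram lives in the plane, so arcs in the annular region formed by the two outer layers cannot cross, and a standard argument shows non-crossing chord patterns on the boundary of a disk are exactly the nested ones. Making this rigorous — in particular handling how a single strand may weave between the outer ring and the inner grid multiple times, and confirming each boundary edge of the inner grid receives a well-defined matching label — is the step I would budget the most care for.

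Finally I would close by noting the construction is explicit and dimension-decreasing, and that the genus-$0$ conclusion is automatic from the nestedness via the cited Proposition, so no separate verification that the virtual mosaic represents $L$ as a classical (rather than virtual) object is needed beyond the genus computation. I would also remark that the bound is sharp in the examples already computed, e.g.\ the trefoil has $\mn{L}=4$ and $\vmn{L}=2$, matching $\mn{L}-2$, which serves as a useful sanity check on the construction.
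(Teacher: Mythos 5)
Your proposal is correct and is essentially the paper's own argument: observe that the crossing tiles $T_9,T_{10}$ cannot occupy the first or last rows or columns of a suitably connected classical mosaic, delete that outer frame, and record how its arcs joined the inner endpoints as edge identifications on the resulting $(\mn{L}-2)\times(\mn{L}-2)$ grid. Two small remarks: what you call ``the two outermost layers'' is really the single outermost ring (first and last rows and columns), whose removal drops the dimension by $2$, so no second-ring analysis is needed; and the nestedness/genus-$0$ verification you budget the most care for is a nice strengthening but is not required for the bound itself, since the identification already reproduces the Gauss code of $L$ regardless of genus.
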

\begin{proof}
Since $L$ is not the unknot, we have that $\mn{L}\ge4$. (Otherwise the mosaic for $L$ would contain at most one crossing tile.) We now form a virtual $(n-2)$-mosaic for $L$. Note that the tiles in the first and last rows and columns of the mosaic cannot be $T_9$ or $T_{10}$ (the crossing tiles). Delete these rows and columns and identify edges to establish the original connections. Label any remaining edges in matched pairs. 
\end{proof}

\begin{example}\label{ex:7_2}
Consider knot $7_2$. It is shown in \cite{involve} that $\mn{7_2}=6$. The construction of a virtual 4-mosaic representing $7_2$ is shown in Figure \ref{virtual-from-classical}.
\begin{figure}[htp]
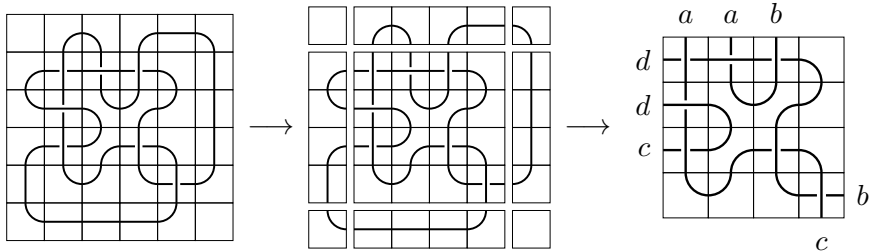

    \centering
    \begin{mosaic}[.5] % classical mosaic knot 7.2 
    \tileo \& \tileii \& \tilei \& \tileii \& \tilev \& \tilei \\
    \tileii \& \tilex \& \tileix \& \tilex \& \tilei \& \tilevi \\
    \tileiii \& \tileix \& \tilevii \& \tileviii \& \tileiv \& \tilevi \\
    \tileii \& \tilex \& \tileviii \& \tilex \& \tilei \& \tilevi \\
    \tilevi \& \tileiii \& \tileiv \& \tileiii \& \tilex \& \tileiv \\
    \tileiii \& \tilev \& \tilev \& \tilev \& \tileiv \& \tileo \\
    \end{mosaic}\meq[.5em]{\longrightarrow}%
    \begin{mosaic}[.5]
    \tileo\hspace{1mm} \& \tileii \& \tilei \& \tileii \& \tilev \& \hspace{1mm}\tilei \\[1mm]
    \tileii\hspace{1mm} \& \tilex \& \tileix \& \tilex \& \tilei \& \hspace{1mm}\tilevi \\
    \tileiii\hspace{1mm} \& \tileix \& \tilevii \& \tileviii \& \tileiv \& \hspace{1mm}\tilevi \\
    \tileii\hspace{1mm} \& \tilex \& \tileviii \& \tilex \& \tilei \& \hspace{1mm}\tilevi \\
    \tilevi\hspace{1mm} \& \tileiii \& \tileiv \& \tileiii \& \tilex \& \hspace{1mm}\tileiv \\[1mm]
    \tileiii\hspace{1mm} \& \tilev \& \tilev \& \tilev \& \tileiv \& \hspace{1mm}\tileo \\
    \end{mosaic}\meq[.5em]{\longrightarrow}%
    \begin{vmosaic}[.6]{4}{4}{{a,a,b,}}{{,,,b}}{{c,,,}}{{,c,d,d}}
    \tilex \& \tileix \& \tilex \& \tilei \\
    \tileix \& \tilevii \& \tileviii \& \tileiv \\
    \tilex \& \tileviii \& \tilex \& \tilei \\
    \tileiii \& \tileiv \& \tileiii \& \tilex \\
    \end{vmosaic}
    \caption{Constructing a virtual 4-mosaic from a classical 6-mosaic.}
    \label{virtual-from-classical}
\end{figure}
Note that the resulting virtual 4-mosaic is not minimal. Figure \ref{knot72} shows that $\vmn{7_2}=3$.
\begin{figure}[htp]
    \centering
    \begin{vmosaic}[.8]{3}{3}{{a,b,b}}{{c,d,d}}{{c,e,e}}{{a,f,f}} % knot 7.2
    \tilex \& \tileix \& \tilex \\
    \tileix \& \tileviii \& \tilevii \\
    \tilex \& \tileix \& \tilex \\
    \end{vmosaic}
    \caption{Knot $7_2$ has virtual mosaic number 3.}
    \label{knot72}
\end{figure}

Thus, we see that the bound given in Proposition \ref{class-virt-rel} is not sharp.
\end{example}

%%%%%%%%%%%%%%%%%%%%%%%%%%%%%%%%%%%%%%%%%%%%%%%%%%%
\section{Virtual Mosaic Moves}

In this section, we focus our attention on virtual mosaic equivalence. We provide a collection of virtual mosaic moves that do not affect knot type. 

In the diagrams that follow, mosaic arcs in light gray are optional and may or may not be present. Arcs that are dotted may replace overlapping black arcs. For instance, \raisebox{.125cm}{\begin{mosaic}[.5] \rlap{\tilevi[densely dotted]}\tileiv \\ \end{mosaic}} is taken to mean \raisebox{.125cm}{\begin{mosaic}[.5] \tileiv \\ \end{mosaic}} or \raisebox{.125cm}{\begin{mosaic}[.5] \tilevi \\ \end{mosaic}}.

In each equivalence that follows, reflections and rotations of the moves illustrated are also allowed. In addition, we omit any move that is an exact replica of one pictured, except with all \raisebox{.125cm}{\begin{mosaic}[.5] \tileix \\ \end{mosaic}} ($T_9$) tiles replaced with \raisebox{.125cm}{\begin{mosaic}[.5] \tilex \\ \end{mosaic}} ($T_{10}$) tiles and vice versa. Unlabeled edges may be at the edge or in the middle of the mosaic; labeled edges must be on the edge. 

\subsection{Classical mosaic moves (KL moves)}\label{KL}

In the interior of a virtual mosaic grid, classical mosaic moves, introduced by Lomonaco and Kauffman in \cite{lomkau}, may be applied without changing the virtual mosaic's knot or link type. We refer to these moves as KL moves. Names of moves are taken from \cite{lomkau}, with the convention that moves that differ by one of the symmetries described above are listed once and both names are provided.

\subsubsection{Planar isotopy moves}
\begin{align*}
    &\begin{mosaic}[.7]\rlap{\tileiv[\graycolor]}\tileii \&  \tileiv\\ \tileiv \& \tileii[\graycolor]\\ \end{mosaic}\meq[.5em]{\xleftrightarrow[\rule{9mm}{0pt}]{P_1}}\begin{mosaic}[.7]\tileiv[\graycolor] \& \tilevi \\ \tilev \& \rlap{\tileii[\graycolor]}\tileiv \\ \end{mosaic}
    \hspace{1cm} \begin{mosaic}[.7] \tilev \&  \rlap{\tileiii[\graycolor]}\tilei\\ \tileii \& \rlap{\tileii[\graycolor]}\tileiv\\ \end{mosaic}\meq[.5em]{\xleftrightarrow[\rule{9mm}{0pt}]{P_2/P_3}}\begin{mosaic}[.7]\tilei \& \tileiii[\graycolor] \\ \tilevi \& \tileii[\graycolor] \\ \end{mosaic}
\\[3mm]
    &\begin{mosaic}[.7]\tileiv[\graycolor] \&  \tileii\\ \tilev \& \rlap{\tileii[\graycolor]}\tileiv\\ \end{mosaic}\meq[.5em]{\xleftrightarrow[\rule{9mm}{0pt}]{P_4}}\begin{mosaic}[.7]\rlap{\tileiv[\graycolor]}\tileii \& \tilev \\ \tileiv \& \tileii[\graycolor] \\ \end{mosaic}
    \hspace{1cm} \begin{mosaic}[.7] \tileiv[\graycolor] \&  \tileiii[\graycolor]\\ \tilev \& \tilev\\ \end{mosaic}\meq[.5em]{\xleftrightarrow[\rule{9mm}{0pt}]{P_5}}\begin{mosaic}[.7]\rlap{\tileiv[\graycolor]}\tileii \& \rlap{\tileiii[\graycolor]}\tilei \\ \tileiv \& \tileiii \\ \end{mosaic}
\\[3mm]
    &\begin{mosaic}[.7]\tilei \&  \tileiii[\graycolor]\\ \tileiv \& \tileii[\graycolor]\\ \end{mosaic}\meq[.5em]{\xleftrightarrow[\rule{9mm}{0pt}]{P_6}}\begin{mosaic}[.7]\tilev \& \rlap{\tileiii[\graycolor]}\tilei \\ \tilev \& \rlap{\tileii[\graycolor]}\tileiv \\ \end{mosaic}
    \hspace{1cm} \begin{mosaic}[.7] \tileiv[\graycolor] \&  \tileiii[\graycolor]\\ \tilei \& \tileii[\graycolor] \\ \end{mosaic}\meq[.5em]{\xleftrightarrow[\rule{9mm}{0pt}]{P_7}}\begin{mosaic}[.7]\rlap{\tileiv[\graycolor]}\tileii \& \rlap{\tileiii[\graycolor]}\tilei \\ \tileviii \& \rlap{\tileii[\graycolor]}\tileiv \\ \end{mosaic}
\\[3mm]
    &\begin{mosaic}[.7]\rlap{\tileiv[\graycolor]}\tileii \&  \tileviii\\ \tileix \& \rlap{\tileii[\graycolor]}\tileiv\\ \end{mosaic}\meq[.5em]{\xleftrightarrow[\rule{9mm}{0pt}]{P_8/P_9}}\begin{mosaic}[.7]\rlap{\tileiv[\graycolor]}\tileii \&  \tileix\\ \tileviii \& \rlap{\tileii[\graycolor]}\tileiv\\ \end{mosaic}
    \hspace{1cm} \begin{mosaic}[.7] \tilevii \&  \tileix\\ \rlap{\tilei[\graycolor]}\tileiii \& \rlap{\tileii[\graycolor]}\tileiv \\ \end{mosaic}\meq[.5em]{\xleftrightarrow[\rule{9mm}{0pt}]{\mathclap{P_{10}/P_{11}}}}\begin{mosaic}[.7] \tilex \&  \tileviii\\ \rlap{\tilei[\graycolor]}\tileiii \& \rlap{\tileii[\graycolor]}\tileiv \\ \end{mosaic}
\end{align*}
\subsubsection{Reidemeister moves}
\begin{align*}
    &\begin{mosaic}[.7]\rlap{\tileiv[\graycolor]}\tileii \&  \rlap{\tileiii[\graycolor]}\tilei\\ \tileix \& \rlap{\tileii[\graycolor]}\tileiv\\ \end{mosaic}\meq[.5em]{\xleftrightarrow[\rule{9mm}{0pt}]{R_1/R'_1}}\begin{mosaic}[.7]\rlap{\tileiv[\graycolor]}\tileii \&  \rlap{\tileiii[\graycolor]}\tilei\\ \tileviii \& \rlap{\tileii[\graycolor]}\tileiv\\ \end{mosaic}
\\[3mm]
    &\begin{mosaic}[.7]\rlap{\tileiv[\graycolor]}\tileii \&  \tileviii\\ \tileviii \& \rlap{\tileii[\graycolor]}\tileiv\\ \end{mosaic}\meq[.5em]{\xleftrightarrow[\rule{9mm}{0pt}]{R_2/R'_2}}\begin{mosaic}[.7]\rlap{\tileiv[\graycolor]}\tileii \&  \tilex\\ \tileix \& \rlap{\tileii[\graycolor]}\tileiv\\ \end{mosaic}
    \hspace{1cm} \begin{mosaic}[.7] \rlap{\tileiv[\graycolor]}\tileii \&  \rlap{\tileiii[\graycolor]}\tilei\\ \tileix \& \tileix \\ \end{mosaic}\meq[.5em]{\xleftrightarrow[\rule{9mm}{0pt}]{R''_2/R'''_2}}\begin{mosaic}[.7] \rlap{\tileiv[\graycolor]}\tileii \&  \rlap{\tileiii[\graycolor]}\tilei\\ \tileviii \& \tilevii \\ \end{mosaic}
\\[3mm]
&\begin{mosaic}[.7]
\rlap{\tilei[densely dotted]}\tilevi \& \tilevi \& \tileiii[\graycolor] \\
\tileix \& \tileix \& \tilev \\
\rlap{\tilei[\graycolor]}\tileiii \& \tilex \& \rlap{\tilei[densely dotted]}\tilev \\
\end{mosaic}
\meq[.5em]{\xleftrightarrow[\rule{9mm}{0pt}]{R_3^{(n)}}}
\begin{mosaic}[.7]
\rlap{\tilev[densely dotted]}\tileiii \& \tilex \& \rlap{\tileiii[\graycolor]}\tilei \\
\tilev \& \tileix \& \tileix \\
\tilei[\graycolor] \& \tilevi \& \rlap{\tilevi[densely dotted]}\tileiii \\
\end{mosaic}
\end{align*}

\subsection{Surface isotopies}\label{surf-iso}

The following are additional isotopies needed to capture the isotopies of a virtual knot or link that involve the boundary of the mosaic.

\begin{align*}
% \item \label{iso-cornerpinch}%removes a corner bump
% \begin{vmosaic}[1]{1}{1}{{x}}{{x}}{{}}{{{}}} \tileviii \\ \end{vmosaic}\meq[1em]{\longleftrightarrow}\begin{vmosaic}[1]{1}{1}{{x}}{{x}}{{}}{{{}}} \tilei \\ \end{vmosaic}
%removes a bump off of the mosaic
&\begin{vmosaic}[.8]{1}{2}{{x,x}}{{}}{{}}{{}} \rlap{\tilevi[densely dotted]}\tileiv \& \rlap{\tilevi[dashed]}\tileiii \\ \end{vmosaic}
\meq[.5em]{\xleftrightarrow[\rule{9mm}{0pt}]{\mathit{SI}_{\!1}}}
\begin{vmosaic}[.8]{1}{2}{{x,x}}{{}}{{}}{{}} \rlap{\tileii[densely dotted]}\tilev \& \rlap{\tilei[dashed]}\tilev \\ \end{vmosaic}
\\[3mm]
%removes a bump off of the mosaic with line going through it
&\raisebox{1.4cm}{\begin{vmosaic}[.8]{1}{3}{{x,y,z}}{{}}{{}}{{}} \rlap{\tileii[densely dotted]}\tilev \& \tilex \& \rlap{\tilei[dashed]}\tilev \\ \end{vmosaic}}\hspace{2mm}
\begin{vmosaic}[.8]{3}{1}{{}}{{x,y,z}}{{}}{{}} \rlap{\tileiv[\graycolor]}\tileo \\ \tilev \\ \rlap{\tilei[\graycolor]}\tileo \\ \end{vmosaic}
\meq[.5em]{\xleftrightarrow[\rule{9mm}{0pt}]{\mathit{SI}_{\!2}}}
\raisebox{1.4cm}{\begin{vmosaic}[.8]{1}{3}{{x,y,z}}{{}}{{}}{{}} \rlap{\tilevi[densely dotted]}\tileiv\& \tilevi \& \rlap{\tilevi[dashed]}\tileiii \\ \end{vmosaic}}\hspace{2mm}
\begin{vmosaic}[.8]{3}{1}{{}}{{x,y,z}}{{}}{{}} \rlap{\tileiv[\graycolor]}\tileii \\ \tileix \\ \rlap{\tilei[\graycolor]}\tileiii \\ \end{vmosaic}
\\[3mm]
&\begin{vmosaic}[.8]{1}{3}{{a,x,x}}{{}}{{}}{{}} \rlap{\tileiv[densely dotted]}\tilevi \& \tileo \& \tileii[\graycolor] \\ \end{vmosaic}
\meq[.5em]{\xleftrightarrow[\rule{9mm}{0pt}]{\mathit{SI}_{\!3}}}
\begin{vmosaic}[.8]{1}{3}{{x,x,a}}{{}}{{}}{{}} \rlap{\tilev[densely dotted]}\tileii \& \tilev \& \rlap{\tileii[\graycolor]}\tileiv \\ \end{vmosaic}
\\[3mm]
&\begin{vmosaic}[.8]{1}{3}{{a,x,x}}{{}}{{}}{{}} \rlap{\tilei[\graycolor]}\rlap{\tileii[\graycolor]}\tilev[\graycolor] \& \rlap{\tilei[\graycolor]}\rlap{\tileii[\graycolor]}\tilev[\graycolor] \& \rlap{\tilei[\graycolor]}\rlap{\tileii[\graycolor]}\tilev[\graycolor] \\ \end{vmosaic}
\meq[.5em]{\xleftrightarrow[\rule{9mm}{0pt}]{\mathit{SI}_{\!4}}}
\begin{vmosaic}[.8]{1}{3}{{x,x,a}}{{}}{{}}{{}} \rlap{\tilei[\graycolor]}\rlap{\tileii[\graycolor]}\tilev[\graycolor] \& \rlap{\tilei[\graycolor]}\rlap{\tileii[\graycolor]}\tilev[\graycolor] \& \rlap{\tilei[\graycolor]}\rlap{\tileii[\graycolor]}\tilev[\graycolor] \\ \end{vmosaic}
\\[3mm]
%a bump slide around a corner
&\begin{vmosaic}[.8]{1}{2}{{x,x}}{{a}}{{}}{{}} \tilei[\graycolor] \& \tileii \\ \end{vmosaic}
\meq[.5em]{\xleftrightarrow[\rule{9mm}{0pt}]{\mathit{SI}_{\!5}}}
\begin{vmosaic}[.8]{1}{2}{{a,x}}{{x}}{{}}{{}} \rlap{\tilei[\graycolor]}\tileiii \& \tilei \\ \end{vmosaic}
\\[3mm]
&\begin{vmosaic}[.8]{1}{2}{{x,x}}{{a}}{{}}{{}} \rlap{\tilei[\graycolor]}\rlap{\tileii[\graycolor]}\tilev[\graycolor] \& \tilei[\graycolor] \\ \end{vmosaic}
\meq[.5em]{\xleftrightarrow[\rule{9mm}{0pt}]{\mathit{SI}_{\!6}}}
\begin{vmosaic}[.8]{1}{2}{{a,x}}{{x}}{{}}{{}} \rlap{\tilei[\graycolor]}\rlap{\tileii[\graycolor]}\tilev[\graycolor] \& \tilei[\graycolor] \\ \end{vmosaic}
\\[3mm]
%another bump slide around a corner
&\begin{vmosaic}[.8]{1}{2}{{x,x}}{{a}}{{}}{{}} \rlap{\tilev[densely dotted]}\tileii \& \tilev \\ \end{vmosaic}
\meq[.5em]{\xleftrightarrow[\rule{9mm}{0pt}]{\mathit{SI}_{\!7}}}
\begin{vmosaic}[.8]{1}{2}{{a,x}}{{x}}{{}}{{}} \rlap{\tileiv[densely dotted]}\tilevi \& \tileo \\ \end{vmosaic}
\\[3mm]
%chopped up L-K's P8 move vertically
&\raisebox{1cm}{\begin{vmosaic}[.8]{1}{2}{{x,y}}{{}}{{}}{{}} \rlap{\tilei[\graycolor]}\tileiii \& \tilex \\ \end{vmosaic}}\hspace{2mm}
\begin{vmosaic}[.8]{2}{1}{{}}{{y,x}}{{}}{{}} \rlap{\tileiv[\graycolor]}\tileii \\ \tileviii \\ \end{vmosaic}
\meq[.5em]{\xleftrightarrow[\rule{9mm}{0pt}]{\mathit{SI}_{\!8}}}
\raisebox{1cm}{\begin{vmosaic}[.8]{1}{2}{{x,y}}{{}}{{}}{{}} \rlap{\tilei[\graycolor]}\tileiii \& \tilevii \\ \end{vmosaic}}\hspace{2mm}
\begin{vmosaic}[.8]{2}{1}{{}}{{y,x}}{{}}{{}} \rlap{\tileiv[\graycolor]}\tileii \\ \tileix \\ \end{vmosaic}
\\[3mm]
%\item \label{iso-chopP9} %chopped up L-K's P9 move vertically
%\hspace{.1in}\begin{vmosaic}[1]{1}{2}{{x,y}}{{}}{{}}{{}} \rlap{\tilei[\graycolor]}\tileiii \& \tileix \\ \end{vmosaic}\hspace{.2in}
%\begin{vmosaic}[1]{2}{1}{{}}{{y,x}}{{}}{{}} \rlap{\tileiv[\graycolor]}\tileii \\ \tileviii \\ \end{vmosaic}\meq[1em]{\longleftrightarrow}
%\hspace{.1in}\begin{vmosaic}[1]{1}{2}{{x,y}}{{}}{{}}{{}} \rlap{\tilei[\graycolor]}\tileiii \& \tilevii \\ \end{vmosaic}\hspace{.2in}
%\begin{vmosaic}[1]{2}{1}{{}}{{y,x}}{{}}{{}} \rlap{\tileiv[\graycolor]}\tileii \\ \tilex \\ \end{vmosaic}
%\smallskip
%chopped up L-K's P10 move vertically
&\raisebox{1cm}{\begin{vmosaic}[.8]{1}{2}{{x,y}}{{}}{{}}{{}} \tilex \& \rlap{\tileii[\graycolor]}\tileiv \\ \end{vmosaic}}\hspace{2mm}
\begin{vmosaic}[.8]{2}{1}{{}}{{y,x}}{{}}{{}} \rlap{\tileiv[\graycolor]}\tileii \\ \tileviii \\ \end{vmosaic}
\meq[.5em]{\xleftrightarrow[\rule{9mm}{0pt}]{\mathit{SI}_{\!9}}}
\raisebox{1cm}{\begin{vmosaic}[.8]{1}{2}{{x,y}}{{}}{{}}{{}}  \tileviii \& \rlap{\tileii[\graycolor]}\tileiv  \\ \end{vmosaic}}\hspace{2mm}
\begin{vmosaic}[.8]{2}{1}{{}}{{y,x}}{{}}{{}} \rlap{\tileiv[\graycolor]}\tileii \\ \tilex \\ \end{vmosaic}
%\item \label{iso-chopP11} %chopped up L-K's P11 move vertically
%\hspace{.1in}\begin{vmosaic}[1]{1}{2}{{x,y}}{{}}{{}}{{}}  \tileviii \& \rlap{\tileii[\graycolor]}\tileiv  \\ \end{vmosaic}\hspace{.2in}
%\raisebox{-10mm}{\begin{vmosaic}[1]{2}{1}{{}}{{y,x}}{{}}{{}} \rlap{\tileiv[\graycolor]}\tileii \\ \tilex \\ \end{vmosaic}}\meq[1em]{\longleftrightarrow}
%\hspace{.1in}\begin{vmosaic}[1]{1}{2}{{x,y}}{{}}{{}}{{}} \tilex \& \rlap{\tileii[\graycolor]}\tileiv \\ \end{vmosaic}\hspace{.2in}
%\raisebox{-10mm}{\begin{vmosaic}[1]{2}{1}{{}}{{y,x}}{{}}{{}} \rlap{\tileiv[\graycolor]}\tileii \\ \tileviii \\ \end{vmosaic}}
\end{align*}

%%%%%%%%%%%%%%%%%%%%%%%%%%%%%%%%%%%%%%%%%%%%%%%%%%%%%%%%%%%%%%%%%
\subsection{Stabilizations \& destabilizations}\label{stab-destab}

Just as with virtual knots and links viewed as knot diagrams on surfaces, we need to include certain stabilization and destabilization moves that allow us to represent virtual knots and links on surfaces of different genera.
\begin{align*}
&\begin{vmosaic}[.8]{1}{1}{{x}}{{y}}{{}}{{}} \tilevii[\graycolor] \\ \end{vmosaic}
\meq[.5em]{\xleftrightarrow[\rule{9mm}{0pt}]{\mathit{Stab}_1}}
\begin{vmosaic}[.8]{1}{1}{{y}}{{x}}{{}}{{}} \tilevii[\graycolor] \\ \end{vmosaic}
\\[3mm]
&\begin{vmosaic}[.8]{1}{2}{{x,y}}{{}}{{}}{{}} \tilevii[\graycolor] \& \tileviii[\graycolor] \\ \end{vmosaic}
\meq[.5em]{\xleftrightarrow[\rule{9mm}{0pt}]{\mathit{Stab}_2}}
\begin{vmosaic}[.8]{1}{2}{{y,x}}{{}}{{}}{{}} \tilevii[\graycolor] \& \tileviii[\graycolor] \\ \end{vmosaic}
\\[3mm]
&\begin{vmosaic}[.8]{1}{1}{{x}}{{y}}{{}}{{}} \rlap{\tileii[densely dotted]}\tilev \\ \end{vmosaic}
\meq[.5em]{\xleftrightarrow[\rule{9mm}{0pt}]{\mathit{Stab}_3}}
\begin{vmosaic}[.8]{1}{1}{{y}}{{x}}{{}}{{}} \rlap{\tilevi[densely dotted]}\tileiv \\ \end{vmosaic}
\\[3mm]
&\begin{vmosaic}[.8]{1}{2}{{x,y}}{{}}{{}}{{}} \rlap{\tileii[densely dotted]}\tilev \& \rlap{\tileii[\graycolor]}\tileiv \\ \end{vmosaic}
\meq[.5em]{\xleftrightarrow[\rule{9mm}{0pt}]{\mathit{Stab}_4}}
\begin{vmosaic}[.8]{1}{2}{{y,x}}{{}}{{}}{{}} \rlap{\tilevi[densely dotted]}\tileiv \& \tileii[\graycolor] \\ \end{vmosaic}
%\item \label{stab-edgedrag2}\begin{vmosaic}[1]{1}{2}{{x,y}}{{{}}}{{}}{{{}}} \tileii \& \rlap{\tileii[\graycolor]}\tileiv \\ \end{vmosaic}\meq[1em]{\longleftrightarrow}\begin{vmosaic}[1]{1}{2}{{y,x}}{{}}{{}}{{{}}} \tilevi \& \tileii[\graycolor] \\ \end{vmosaic}
\end{align*}

%%%%%%%%%%%%%%%%%%%%%%%%%%%%%%%%%%%%%%%
\subsection{Mosaic injection \& ejection}\label{sec-inject}
In this section, we describe the process of enlarging or shrinking a mosaic without changing its genus or the link it represents. Let $\mathbb{V}^{(n)}$ denote the set of virtual $n$-mosaics. If $V^{(n)}\in\mathbb{V}^{(n)}$, we denote the $ij$-entry of $V^{(n)}$ by $V^{(n)}_{ij}$.

\begin{defn}\label{def:injection}
The \emph{standard virtual mosaic injection}
\begin{align*}
    \iota\colon\mathbb{V}^{(n)}&\to\mathbb{V}^{(n+2)}\\
    V^{(n)}&\mapsto V^{(n+2)}
\end{align*}
will be defined as
\[
V_{ij}^{(n+2)}=\begin{cases*}
V_{ij}^{(n)} & if $0\le i,j<n$\\
T_5 & if $i<n$, $j\ge n$, and $V_{i,n-1}^{(n)}\in\{T_2,T_3,T_5,T_7,T_8,T_9,T_{10}\}$ \\
T_6 & if $i\ge n$, $j<n$, and $V_{n-1,j}^{(n)}\in\{T_1,T_2,T_6,T_7,T_8,T_9,T_{10}\}$ \\
T_0 & $\vphantom{V_i^{(n)}}$otherwise,
\end{cases*}
\]
where the new boundary edges are labeled in adjacent pairs. The reverse process is called an \emph{ejection}.
\end{defn}

\begin{figure}[h]
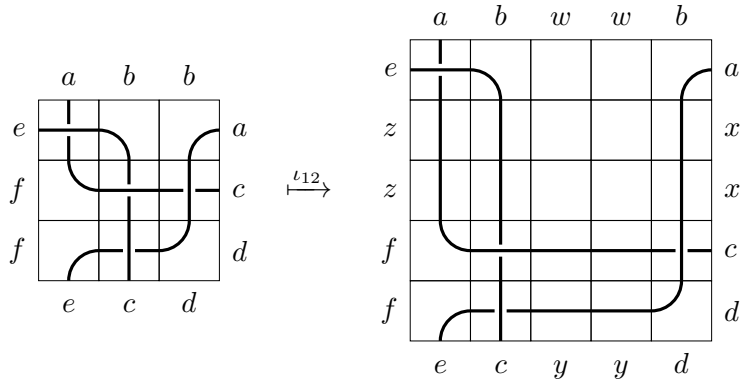

    \centering
\begin{vmosaic}[.8]{3}{3}{{a,b,b}}{{a,c,d}}{{d,c,e}}{{f,f,e}} % classical figure-8
\tileix \& \tilei \& \tileii \\
\tileiii \& \tileix \& \tilex \\
\tileii \& \tilex \& \tileiv \\
\end{vmosaic}
\meq[1em]{\xmapsto{\iota_{12}}}
\begin{vmosaic}[.8]{5}{5}{{a,b,w,w,b}}{{a,x,x,c,d}}{{d,y,y,c,e}}{{f,f,z,z,e}} 
\tileix \& \tilei \& \tileo \& \tileo \& \tileii \\
\tilevi \& \tilevi \& \tileo \& \tileo \& \tilevi \\
\tilevi \& \tilevi \& \tileo \& \tileo \& \tilevi \\
\tileiii \& \tileix \& \tilev \& \tilev \& \tilex \\
\tileii \& \tilex \& \tilev \& \tilev \& \tileiv \\
\end{vmosaic}
    \caption{A $(1,2)$-injection}
    \label{fig:inject}
\end{figure}

More generally, we can define the \emph{virtual mosaic $(i,j)$-injection} $\iota_{ij}$, in which the two additional rows are inserted below row $i$ (or at the top of the mosaic if $i=0$) and the two additional columns are inserted to the right of column $j$ (or at the far left of the mosaic if $j=0$). The standard mosaic injection is the same as $\iota_{nn}$. Topologically, the new adjacent pairs of edges are simply $S^2$ summands in the underlying surface. Thus, the new mosaic has the same genus as the original. Since the Gauss code is unaffected by the injection, the link represented by the mosaic is unchanged. Figure \ref{fig:inject} illustrates a $(1,2)$-injection on a mosaic of the classical figure-eight knot.

Here, we provide a pair of examples to illustrate how sequences of KL and virtual mosaic moves may be combined to achieve certain desirable results.

\begin{example}
In this example, we look at how to perform the following R1-like move that passes through the mosaic boundary. 

\[
\begin{vmosaic}[1]{1}{2}{{x,y}}{{x}}{{}}{{{}}} \tilex \& \tilev \\ \end{vmosaic}\meq[1em]{\longrightarrow}
\begin{vmosaic}[1]{1}{2}{{x,x}}{{y}}{{}}{{{}}} \tilei \& \tileo \\ \end{vmosaic}
\]

\bigskip

\noindent We begin with move (1), which is an application of $\mathit{Stab}_3$.
\[
\begin{vmosaic}[1]{1}{2}{{x,y}}{{x}}{{}}{{{}}} \tilex \& \tilev \\ \end{vmosaic}\meq[1em]{\xrightarrow{(1)}}
\begin{vmosaic}[1]{1}{2}{{x,x}}{{y}}{{}}{{{}}} \tilex \& \tileiv \\ \end{vmosaic}\]
Move (2) is an injection introducing new labels $z$ and $w$, and move (3) is surface isotopy $\mathit{SI}_{\!1}$.
\[
\meq[1em]{\xrightarrow{(2)}}
\begin{vmosaic}[1]{3}{4}{{x,x, z,z}}{{w,w,y}}{{}}{{{}}} \tilevi \& \tilevi \& \tileo \& \tileo\\
\tilevi \& \tilevi \& \tileo \& \tileo\\
\tilex \& \tileiv \& \tileo \& \tileo\\ \end{vmosaic}
\meq[1em]{\xrightarrow{(3)}}
\begin{vmosaic}[1]{3}{4}{{x,x, z,z}}{{w,w,y}}{{}}{{{}}} \tileii \& \tilei \& \tileo \& \tileo\\
\tilevi \& \tilevi \& \tileo \& \tileo\\
\tilex \& \tileiv \& \tileo \& \tileo\\ \end{vmosaic}
\]
Move (4) is KL isotopy $\mathit{P}_6$, while move (5) is the KL Reidemeister 1 move, $\mathit{R}_1/\mathit{R}_{1'}$.
\[
\meq[1em]{\xrightarrow{(4)}}
\begin{vmosaic}[1]{3}{4}{{x,x, z,z}}{{w,w,y}}{{}}{{{}}} \tileo \& \tileo \& \tileo \& \tileo\\
\tileii \& \tilei \& \tileo \& \tileo\\
\tilex \& \tileiv \& \tileo \& \tileo\\ \end{vmosaic}\meq[1em]{\xrightarrow{(5)}}
\begin{vmosaic}[1]{3}{4}{{x,x, z,z}}{{w,w,y}}{{}}{{{}}} \tileo \& \tileo \& \tileo \& \tileo\\
\tileo \& \tileo \& \tileo \& \tileo\\
\tilei \& \tileo \& \tileo \& \tileo\\ \end{vmosaic}\]
Finally, move (6) is an ejection, and we are done.
\[\meq[1em]{\xrightarrow{(6)}}
\begin{vmosaic}[1]{1}{2}{{x,x}}{{y}}{{}}{{{}}} \tilei \& \tileo \\ \end{vmosaic}
\]
\end{example}\bigskip

\begin{example}
Here, we demonstrate that the two virtual mosaics of the virtual trefoil pictured in Figure \ref{diff_genus_vtref} are connected by a sequence of virtual mosaic moves. Begin with the genus-2 mosaic in Figure \ref{diff_genus_vtref}(R). Move (1) represents $\mathit{Stab}_1$ applied to the virtual trefoil at the northeast corner. The new mosaic has genus 1.
\[
\begin{vmosaic}[1]{2}{2}{{a,b}}{{a,b}}{{c,d}}{{c,d}}
\tilex \& \tilevii \\
\tileix \& \tileviii \\
\end{vmosaic}\meq[1em]{\xrightarrow{(1)}}
\begin{vmosaic}[1]{2}{2}{{a,a}}{{b,b}}{{c,d}}{{c,d}}
\tilex \& \tilevii \\
\tileix \& \tileviii \\
\end{vmosaic}
\]
Move (2) is a standard injection, introducing new labels $e,f,g,h$, and (3) is an instance of surface isotopy $\mathit{SI}_{\!1}$.
\[
\meq[1em]{\xrightarrow{(2)}}
\begin{vmosaic}[.8]{4}{4}{{a,a,e,e}}{{b,b,f,f}}{{g,g,c,d}}{{h,h,c,d}} % virtual trefoil injection
\tilex \& \tilevii \& \tilev \& \tilev \\
\tileix \& \tileviii \&  \tilev \& \tilev \\
\tilevi \& \tilevi \& \tileo \& \tileo\\
\tilevi \& \tilevi \&  \tileo \& \tileo\\
\end{vmosaic}
\meq[1em]{\xrightarrow{(3)}}
\begin{vmosaic}[.8]{4}{4}{{a,a,e,e}}{{b,b,f,f}}{{g,g,c,d}}{{h,h,c,d}} % virtual trefoil surface isotopy
\tilex \& \tilevii \& \tilev \& \tilei \\
\tileix \& \tileviii \&  \tilev \& \tileiv \\
\tilevi \& \tilevi \& \tileo \& \tileo\\
\tilevi \& \tilevi \&  \tileo \& \tileo\\
\end{vmosaic}
\]
Move (4) is surface isotopy $\mathit{SI}_{\!3}$, while (5) is KL planar isotopy $\mathit{P}_1$. 
\[
\meq[1em]{\xrightarrow{(4)}}
\begin{vmosaic}[.8]{4}{4}{{a,e,e,a}}{{b,b,f,f}}{{g,g,c,d}}{{h,h,c,d}} % virtual trefoil injection
\tilex \& \tilei \& \tileo \& \tilevi \\
\tileix \& \tileviii \&  \tilev \& \tileiv \\
\tilevi \& \tilevi \& \tileo \& \tileo\\
\tilevi \& \tilevi \&  \tileo \& \tileo\\
\end{vmosaic}
\meq[1em]{\xrightarrow{(5)}}
\begin{vmosaic}[.8]{4}{4}{{a,e,e,a}}{{b,b,f,f}}{{g,g,c,d}}{{h,h,c,d}} % virtual trefoil planar isotopy
\tilex \& \tilei \& \tileii \& \tileiv \\
\tileix \& \tileviii \&  \tileiv \& \tileo \\
\tilevi \& \tilevi \& \tileo \& \tileo\\
\tilevi \& \tilevi \&  \tileo \& \tileo\\
\end{vmosaic}
\]
Move (6) is surface isotopy $\mathit{SI}_{\!5}$, while (7) is KL planar isotopy $\mathit{P}_5$.
\[
\meq[1em]{\xrightarrow{(6)}}
\begin{vmosaic}[.8]{4}{4}{{a,e,e,b}}{{b,a,f,f}}{{g,g,c,d}}{{h,h,c,d}} % virtual trefoil surface isotopy
\tilex \& \tilei \& \tileii \& \tilei \\
\tileix \& \tileviii \&  \tileiv \& \tileiii \\
\tilevi \& \tilevi \& \tileo \& \tileo\\
\tilevi \& \tilevi \&  \tileo \& \tileo\\
\end{vmosaic}
\meq[1em]{\xrightarrow{(7)}}
\begin{vmosaic}[.8]{4}{4}{{a,e,e,b}}{{b,a,f,f}}{{g,g,c,d}}{{h,h,c,d}} % virtual trefoil planar isotopy
\tilex \& \tilei \& \tileo \& \tileo \\
\tileix \& \tileviii \&  \tilev \& \tilev \\
\tilevi \& \tilevi \& \tileo \& \tileo\\
\tilevi \& \tilevi \&  \tileo \& \tileo\\
\end{vmosaic}
\]
Finally, move (8) is surface isotopy $\mathit{SI}_{\!4}$ applied along the top right edge of the virtual mosaic. Note that this surface isotopy can be applied to blank tiles as well as tiles containing portions of the knot that don't pass through the boundary, with the effect in either case that pairs of identical edge labels can be moved past other edge labels. Move (9) is the ejection that returns us to the 2-mosaic in Figure \ref{diff_genus_vtref}(L).
\[
\meq[1em]{\xrightarrow{(8)}}
\begin{vmosaic}[.8]{4}{4}{{a,b,e,e}}{{b,a,f,f}}{{g,g,c,d}}{{h,h,c,d}} % virtual trefoil planar isotopy
\tilex \& \tilei \& \tileo \& \tileo \\
\tileix \& \tileviii \&  \tilev \& \tilev \\
\tilevi \& \tilevi \& \tileo \& \tileo\\
\tilevi \& \tilevi \&  \tileo \& \tileo\\
\end{vmosaic}
\meq[1em]{\xrightarrow{(9)}}
\begin{vmosaic}[1]{2}{2}{{a,b}}{{b,a}}{{c,d}}{{c,d}}
\tilex \& \tilei \\
\tileix \& \tileviii \\
\end{vmosaic}
\]
\end{example}

\if false % ignore this example
%%%%%%%%%%%%%%%%%%%%%%%  \/ Old example \/
\begin{example}
In this example, we demonstrate that the virtual trefoil, pictured on the top left on a genus 2 surface, can also be realized on a genus 1 surface. This is the result of a sequence of planar and surface isotopies, a stabilization, an injection, and an ejection. 

Specifically, (1) represents a stabilization (\ref{stab-destab}(\ref{stab-cornerswitch})) of the virtual trefoil that occurs at the northeast corner.
\[
\begin{vmosaic}[1]{2}{2}{{a,b}}{{a,b}}{{c,d}}{{c,d}}
\tilex \& \tilevii \\
\tileix \& \tileviii \\
\end{vmosaic}\meq[1em]{\xrightarrow{(1)}}
\begin{vmosaic}[1]{2}{2}{{a,a}}{{b,b}}{{c,d}}{{c,d}}
\tilex \& \tilevii \\
\tileix \& \tileviii \\
\end{vmosaic}
\]
Move (2) is an injection, and (3) is an instance of surface isotopy \ref{surf-iso}(\ref{iso-edgepinch}).
\[
\meq[1em]{\xrightarrow{(2)}}
\begin{vmosaic}[.8]{4}{4}{{a,a,e,e}}{{f,f,b,b}}{{h,h,c,d}}{{c,d,g,g}} % virtual trefoil injection
\tilevi \& \tilevi \& \tileo \& \tileo \\
\tilevi \& \tilevi \&  \tileo \& \tileo \\
\tilex \& \tilevii \& \tilev \& \tilev\\
\tileix \& \tileviii \&  \tilev \& \tilev\\
\end{vmosaic}
\meq[1em]{\xrightarrow{(3)}}
\begin{vmosaic}[.8]{4}{4}{{a,a,e,e}}{{f,f,b,b}}{{h,h,c,d}}{{c,d,g,g}} % virtual trefoil injection
\tilevi \& \tilevi \& \tileo \& \tileo \\
\tilevi \& \tilevi \&  \tileo \& \tileo \\
\tilex \& \tilevii \& \tilev \& \tilei\\
\tileix \& \tileviii \&  \tilev \& \tileiv\\
\end{vmosaic}
\]
Move (4) is an instance of surface isotopy \ref{surf-iso}(\ref{iso-bumpslide}), while Move (5) is given by surface isotopy \ref{surf-iso}(\ref{iso-cornerbumpslide}).
\[
\meq[1em]{\xrightarrow{(4)}}
\begin{vmosaic}[.8]{4}{4}{{a,e,e,a}}{{f,f,b,b}}{{h,h,c,d}}{{c,d,g,g}} % virtual trefoil injection
\tilevi \& \tileii \& \tilev \& \tileiv \\
\tilevi \& \tilevi \&  \tileo \& \tileo \\
\tilex \& \tilevii \& \tilev \& \tilei\\
\tileix \& \tileviii \&  \tilev \& \tileiv\\
\end{vmosaic}
\meq[1em]{\xrightarrow{(5)}}
\begin{vmosaic}[.8]{4}{4}{{a,a,e,f}}{{f,a,b,b}}{{h,h,c,d}}{{c,d,g,g}} % virtual trefoil injection
\tilevi \& \tileii \& \tilev \& \tilei \\
\tilevi \& \tilevi \&  \tileo \& \tileiii \\
\tilex \& \tilevii \& \tilev \& \tilei\\
\tileix \& \tileviii \&  \tilev \& \tileiv\\
\end{vmosaic}
\]
The next move, Move (6), is actually a sequence of planar isotopies, $P_1$, $P_2$, and $P_3$. Move (7) is an instance of surface isotopy \ref{surf-iso}(\ref{iso-bumpslide}).
\[
\meq[1em]{\xrightarrow{(6)}}
\begin{vmosaic}[.8]{4}{4}{{a,a,e,f}}{{f,a,b,b}}{{h,h,c,d}}{{c,d,g,g}} % virtual trefoil injection
\tilevi \& \tileo \& \tileo \& \tileo \\
\tilevi \& \tileo \&  \tileii \& \tilev \\
\tilex \& \tilei \& \tilevi \& \tileo\\
\tileix \& \tileviii \&  \tileiv \& \tileo\\
\end{vmosaic}
\meq[1em]{\xrightarrow{(7)}}
\begin{vmosaic}[.8]{4}{4}{{a,a,e,f}}{{f,b,b,a}}{{h,h,c,d}}{{c,d,g,g}} % virtual trefoil injection
\tilevi \& \tileo \& \tileo \& \tileo \\
\tilevi \& \tileo \&  \tileii \& \tilei \\
\tilex \& \tilei \& \tilevi \& \tilevi\\
\tileix \& \tileviii \&  \tileiv \& \tileiii\\
\end{vmosaic}
\]
Finally, move (8) is a sequence of planar isotopies $P_6$ and $P_5$ followed by two instances of surface isotopy \ref{surf-iso}(\ref{iso-bumpslide}). Note that this surface isotopy can be applied to blank tiles as well as tiles containing portions of the knot, with the effect in either case that pairs of identical edge labels can be moved past other edge labels. Move (9) is the ejection that returns us to a 2 by 2 mosaic, but this time, our mosaic has genus 1.
\[
\meq[1em]{\xrightarrow{(8)}}
\begin{vmosaic}[1]{4}{4}{{a,b,e,e}}{{f,f,b,a}}{{h,h,c,d}}{{c,d,g,g}} % virtual trefoil injection
\tilevi \& \tileo \& \tileo \& \tileo \\
\tilevi \& \tileo \&  \tileo \& \tileo \\
\tilex \& \tilei \& \tileo \& \tileo\\
\tileix \& \tileviii \&  \tilev \& \tilev\\
\end{vmosaic}
\meq[1em]{\xrightarrow{(9)}}
\begin{vmosaic}[1]{2}{2}{{a,b}}{{b,a}}{{c,d}}{{c,d}}
\tilex \& \tilei \\
\tileix \& \tileviii \\
\end{vmosaic}
\]
\end{example}
%%%%%%%%%%%%%%%%%%%%%%%  /\ Old example /\
\fi % ignore this example

\section{Relationship with Virtual Knot Theory}

Since virtual knots can be viewed as equivalence classes of knot diagrams on orientable surfaces, it seems natural to ask if virtual mosaic theory is equivalent to virtual knot theory. We have a partial answer to this question.

\begin{thm} If $L$ is a virtual knot or link, then there is a virtual mosaic that represents $L$.
\end{thm}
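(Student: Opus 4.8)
The plan is to give a constructive proof: starting from any virtual knot or link $L$ presented as a diagram with classical and virtual crossings in the plane, I will produce a virtual mosaic representing $L$. The guiding idea is the one already in use throughout the paper, namely that a virtual mosaic is a classical mosaic whose $4n$ boundary edges are glued in pairs, and that gluing boundary edges realizes the ``knot on a surface'' picture of virtual knot theory. So the strategy is to convert the virtual part of the diagram into boundary identifications and leave the classical part inside the grid.

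First I would take a generic diagram $D$ of $L$ and isotope it so that every virtual crossing is pushed out to be handled by edge gluing rather than by an explicit tile. Concretely, since the virtual crossing tile $\tilec$ does not appear among the standard tiles, the virtual crossings must be absorbed into the boundary identification. The cleanest way is to invoke the surface model directly: $L$ is represented by a classical link diagram $D_\Sigma$ on some closed orientable surface $\Sigma$ (this is the standard equivalence cited at the start of the paper). Choosing a polygonal (CW / cut-open) representation of $\Sigma$ as a $4n$-gon with edges identified in pairs, I get $D_\Sigma$ drawn in a disk whose boundary arcs carry the gluing data. Then I would overlay a fine $n\times n$ grid on this disk and use the classical Kuriya--Shehab result (tame knot theory equals mosaic knot theory) to realize the interior diagram as suitably connected standard tiles, refining the grid as needed so that every arc endpoint on the boundary lands on a distinct grid edge.

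The key steps in order are: (1) realize $L$ as a classical diagram $D_\Sigma$ on a closed orientable surface $\Sigma$; (2) cut $\Sigma$ open along a standard system of curves to present it as a polygon with paired boundary edges, with $D_\Sigma$ now a classical tangle in the polygon whose only boundary intersections are transverse arc-ends on the paired edges; (3) deform the polygon to a square and impose an $n\times n$ grid, subdividing until each grid cell contains at most one crossing or simple arc piece and each boundary edge of the grid carries at most one strand; (4) fill the cells with the appropriate standard tiles $T_0,\dots,T_{10}$ so the configuration is suitably connected in the interior, which is exactly the classical mosaic realizability problem; and (5) match the boundary gluing of $\Sigma$ to an identification of the $4n$ grid-boundary edges, labeling the paired edges, so that the resulting array is by definition a virtual mosaic representing $L$.

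I expect the main obstacle to be step (3)--(4): ensuring that after cutting the surface open, the boundary strands can be arranged so that exactly one strand crosses each relevant boundary edge of the square grid and the edge-identification pattern of the polygon is compatible with an edge-pairing of the $4n$ grid boundary edges. The polygonal identification of $\Sigma$ pairs the $2g$ (or more) sides of the fundamental polygon, but a virtual $n$-mosaic pairs $4n$ unit edges, so I must subdivide the polygon's sides into grid-edge-length pieces and verify that each strand meeting a side can be isotoped to meet it in a single grid edge, with its partner strand meeting the identified side correspondingly. This is a matter of transversality and general position combined with grid refinement, and while intuitively clear, stating it carefully so that the final array genuinely satisfies the definition of a virtual mosaic (a suitably connected array forming a link diagram on a closed orientable surface) is where the real work lies. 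The theorem is one direction only (``partial answer''), so I do not need the converse or any minimality of $n$; I only need existence of some $n$ and some identification, which the grid-refinement argument supplies.
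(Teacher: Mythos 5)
Your proposal is correct in outline, but it takes a genuinely different route from the paper. The paper's proof is braid-theoretic and algorithmic: it presents $L$ as the closure of a virtual braid (via Kauffman--Lambropoulou), lays the rotated braid generators onto a grid with boundary identifications given by the closure, and then eliminates the forbidden virtual-crossing tiles one at a time from the top of the braid --- a virtual crossing is traded for a $T_7$ tile plus a swap of two boundary labels (which preserves the Gauss code), while a classical crossing in the way is pushed off the board using injections, KL isotopies, and the surface isotopy $\mathit{SI}_2$. That argument deliberately exercises the move calculus developed earlier in the paper and produces an explicit procedure. Your argument instead goes straight to the surface model: realize $L$ as a classical diagram on a closed orientable surface, cut the surface open along a fundamental polygon, overlay a sufficiently fine grid, tile the interior, and convert the polygon's side-pairings into a pairing of the $4n$ boundary edges. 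This is arguably the more natural proof given that a virtual mosaic is \emph{defined} as a glued-up polygon, and it avoids virtual braids entirely; the price is that all the content is concentrated in your steps (3)--(5), which you correctly flag: you must arrange that paired sides of the polygon receive equal numbers of unit edges, that the finitely many transverse intersections of the diagram with the cut system land at midpoints of corresponding unit edges on identified sides, and that leftover boundary edges are absorbed as adjacent pairs (sphere summands) without changing the genus. These are standard general-position and refinement arguments, so the proof goes through, but note that citing Kuriya--Shehab is slightly off target --- what you actually need is the (easier, Lomonaco--Kauffman-style) fact that any generic tangle in a square with marked boundary points is realized by a suitably connected array of the eleven standard tiles after refinement, which your step (3) essentially supplies directly.
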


\begin{proof}
Let $L$ be a virtual knot or link and $B(L)$ be a virtual braid diagram whose closure has virtual knot or link type $L$, as in \cite{braids}.  We rotate $B(L)$ by 45 degrees and place the corresponding braid generators into a grid as in Figure \ref{fig:b-gen}. %If $B(L)$ has any straight vertical lines, we replace them by curved lines, and all crossings are arranged to meet at 90 degree angles. Turning $B(L)$ by 45 degrees and placing it on a grid, then, gives a diagram that can be converted to a virtual mosaic diagram on a surface by extending curved lines to the mosaic boundary and 
\begin{figure}[htb]
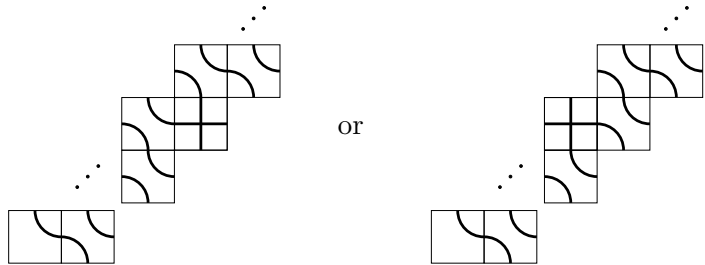

    \centering
 \begin{mosaic}[.7]
  \& \& \& \&  \& \tileuddots \\
  \& \& \& \& \tilevii \& \tilevii \\
  \& \& \& \tilevii \& \rlap{\tilev}\tilevi \\
  \& \tileuddots \& \& \tilevii \\[1mm]
  \tileiii \& \tilevii \& \hspace{1mm} \& \\ 
 \end{mosaic} \meq[2em]{\text{or}}
 \begin{mosaic}[.7]
  \& \& \& \& \& \tileuddots \\
  \& \& \& \& \tilevii \& \tilevii \\
  \& \& \& \rlap{\tilev}\tilevi \& \tilevii \\
  \& \tileuddots \& \& \tilevii \\[1mm]
  \tileiii \& \tilevii \& \hspace{1mm} \\ 
 \end{mosaic}
    \caption{A braid generator inside a mosaic.}
    \label{fig:b-gen}
\end{figure}
Crossings may be classical or virtual. We extend the braid strands to the boundary and identify boundary edges according to standard closure rules. The process is illustrated in Figure \ref{fig:b-virt} with the closed virtual braid $\sigma_2^{-1}\sigma_3v_2\sigma_1^{-1}$. Remaining boundary edges may be identified in pairs.

\begin{figure}[htb]
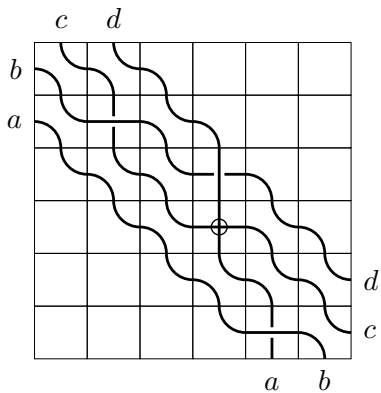

    \centering
 \begin{vmosaic}[.7]{6}{6}{{c,d}}{{,,,,d,c}}{{b,a}}{{,,,,a,b}}
  \tilevii \& \tilevii \& \tilei \& \tileo \& \tileo \& \tileo  \\
  \tilevii \& \tileix \& \tilevii \& \tilei \& \tileo \& \tileo \\
  \tileiii \& \tilevii \& \tilevii \& \tilex \& \tilei \& \tileo  \\
  \tileo \& \tileiii \& \tilevii \& \tilec \& \tilevii \& \tilei  \\
  \tileo \& \tileo \& \tileiii \& \tilevii \& \tilevii \& \tilevii  \\
  \tileo \& \tileo \& \tileo \& \tileiii \& \tileix \& \tilevii  \\
 \end{vmosaic} 
    \caption{A closed braid with virtual crossings.}
    \label{fig:b-virt}
\end{figure}

The obstacle we now face is that some of the tiles in our mosaic may be virtual crossing tiles, which are not permitted on a virtual mosaic. Our goal, then, is to slide these virtual crossings off of the mosaic board so that they are represented only implicitly in the surface.

We begin with the top-most crossing in the braid. If it is a virtual crossing, we replace the crossing tile with tile $T_7$ % or $T_8$ (whichever smoothing agrees with the strands' orientations), effectively smoothing the crossing. In addition, we 
and swap the labels associated to the crossing strands at the top-left of the grid, as in Figure \ref{fig:b-swap}. 
\begin{figure}[htb]
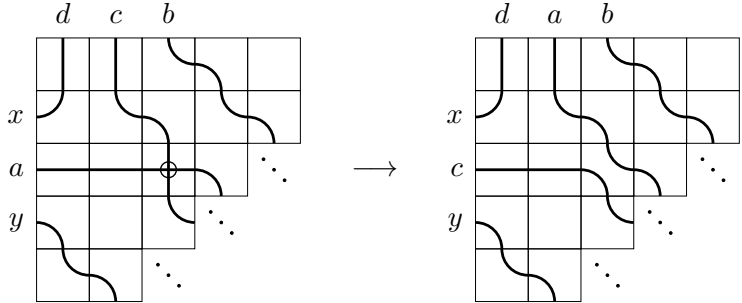

    \centering
 \begin{vmosaic}[.7]{5}{5}{{d,c,b}}{{{}}}{{}}{{,y,a,x}}
  \tilevi \& \tilevi \& \tileiii \& \tilei \& \tileo  \\
  \tileiv \& \tileiii \& \tilei \& \tileiii \& \tilei  \\
  \tilev \& \tilev \& \tilec \& \tilei \& \tiledddots \\
  \tilei \& \tileo \& \tileiii \& \tiledddots \\
  \tileiii \& \tilei \& \tiledddots \\
 \end{vmosaic}\meq[1em]{\longrightarrow}
  \begin{vmosaic}[.7]{5}{5}{{d,a,b}}{{{}}}{{}}{{,y,c,x}}
  \tilevi \& \tilevi \& \tileiii \& \tilei \& \tileo  \\
  \tileiv \& \tileiii \& \tilei \& \tileiii \& \tilei  \\
  \tilev \& \tilev \& \tilevii \& \tilei \& \tiledddots \\
  \tilei \& \tileo \& \tileiii \& \tiledddots \\
  \tileiii \& \tilei \& \tiledddots \\
 \end{vmosaic}
    \caption{Moving a virtual crossing off the board.}
    \label{fig:b-swap}
\end{figure}
Such an operation does not affect the Gauss code of our knot or link, and therefore preserves the virtual knot/link type. 

If the topmost crossing is a classical crossing, we perform KL isotopies, possibly along with some number of surface isotopy moves and injections, to move the crossing tile so that it is along the boundary. Let's say the edge of the crossing tile is labeled $b$. We perform the following sequence of moves:
\begin{itemize}
    \item injections on both columns (or rows) adjacent to $b$
    \item injections on both columns (or rows) adjacent to the other $b$ label
    \item injections on both boundary edges containing $b$ labels.
    \item KL isotopies to move the crossing back to the edge.
\end{itemize}
The result appears on the left in Figure \ref{fig:crossingpush}. 
\begin{figure}[htb]
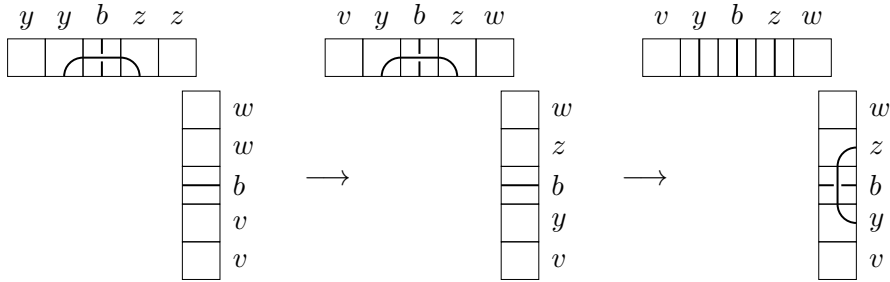

    \centering
 \begin{tabular}{p{3.8cm}p{3.8cm}p{3.8cm}}
 \begin{tabular}{r} 
 \begin{vmosaic}[.5]{1}{5}{{y,y,b,z,z}}{{}}{{}}{{}}
  \tileo \& \tileii \& \tileix \& \tilei \& \tileo \\
 \end{vmosaic}\rule{9mm}{0pt}\\[3mm]
 \begin{vmosaic}[.5]{5}{1}{{}}{{w,w,b,v,v}}{{}}{{}}
   \tileo \\ \tileo \\ \tilev \\ \tileo \\ \tileo \\
 \end{vmosaic}
 \end{tabular}
 &
 \begin{tabular}{r} 
 \begin{vmosaic}[.5]{1}{5}{{v,y,b,z,w}}{{}}{{}}{{}}
  \tileo \& \tileii \& \tileix \& \tilei \& \tileo \\
 \end{vmosaic}\rule{9mm}{0pt}\\[3mm]
 \llap{$\longrightarrow$}\hspace{2cm}\begin{vmosaic}[.5]{5}{1}{{}}{{w,z,b,y,v}}{{}}{{}}
   \tileo \\ \tileo \\ \tilev \\ \tileo \\ \tileo \\
 \end{vmosaic}
 \end{tabular}
 &
  \begin{tabular}{r} 
 \begin{vmosaic}[.5]{1}{5}{{v,y,b,z,w}}{{}}{{}}{{}}
  \tileo \& \tilevi \& \tilevi \& \tilevi \& \tileo \\
 \end{vmosaic}\rule{9mm}{0pt}\\[3mm]
 \llap{$\longrightarrow$}\hspace{2cm}\begin{vmosaic}[.5]{5}{1}{{}}{{w,z,b,y,v}}{{}}{{}}
   \tileo \\ \tileii \\ \tilex \\ \tileiii \\ \tileo \\
 \end{vmosaic}
 \end{tabular}
 \end{tabular}
    \caption{Moving a classical crossing off the board.}
    \label{fig:crossingpush}
\end{figure}
Now, since the tiles two away from the crossing must be empty, we may swap those labels with the labels adjacent to the other $b$ without changing the Gauss code, producing the second image in Figure \ref{fig:crossingpush}. (Note that this operation is not explicitly in our list of virtual mosaic moves, but all that matters here is that the resulting knot has the same virtual knot type.) We may now perform a surface isotopy of type $\mathit{SI}_{\!2}$ to move the crossing off the top of the braid, as shown in the third image in Figure \ref{fig:crossingpush}.

From here, we repeat this process of removing virtual crossing tiles and taking classical crossings from the top-left and moving them to the bottom-right until no more virtual crossing tiles appear on the mosaic. We have thus produced a virtual mosaic. Furthermore, the resulting knot or link has the same knot/link type as $L$.
\end{proof}

We end our discussion of the relationship between virtual knots and virtual mosaics with a conjecture.

\begin{conj}
Virtual knot theory is equivalent to virtual mosaic theory. That is, two virtual knots or links, $L_1$ and $L_2$, are equivalent if and only if any two virtual mosaics that represent $L_1$ and $L_2$ are equivalent.
\end{conj}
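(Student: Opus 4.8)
The plan is to prove the two implications separately, treating the ``only if'' (soundness of the moves) as essentially routine and concentrating the effort on the ``if'' (completeness). For soundness, I would check move-by-move that each virtual mosaic move relates two mosaics of the same virtual type: the KL moves $P_1,\dots,P_{11}$ and $R_1,R_2,R_3^{(n)}$ are classical planar isotopies and Reidemeister moves performed in the interior of the grid, and so preserve the diagram up to isotopy and Reidemeister equivalence; the surface isotopies $\mathit{SI}_1,\dots,\mathit{SI}_9$ are isotopies of the diagram across the glued boundary of the surface; the stabilizations $\mathit{Stab}_1,\dots,\mathit{Stab}_4$ are exactly the handle (de)stabilizations permitted in the surface model of virtual knots; and injection/ejection $\iota_{ij}$ adds or deletes $S^2$-summands, leaving both genus and Gauss code unchanged, as already noted in Definition \ref{def:injection}. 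Since virtual equivalence of diagrams on surfaces is generated by isotopy, Reidemeister moves, (de)stabilization, and orientation-preserving homeomorphism \cite{cks, kuperberg}, each listed move lands inside a single virtual equivalence class.

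The substance of the conjecture is the converse, and here I would follow the blueprint of Kuriya and Shehab's proof that tame knot theory equals mosaic knot theory \cite{kuriya}, lifted from the plane to an arbitrary closed orientable surface. A virtual mosaic $M$ determines a diagram $D_M$ on the surface $\Sigma_M$ obtained from the edge identifications, and by \cite{cks, kuperberg} two virtual mosaics $M_1,M_2$ represent equivalent links exactly when $(D_{M_1},\Sigma_{M_1})$ and $(D_{M_2},\Sigma_{M_2})$ are connected by a finite sequence of (i) surface isotopies of the diagram, (ii) Reidemeister moves, (iii) stabilizations and destabilizations, and (iv) orientation-preserving surface homeomorphisms. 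The goal is then to realize each of (i)--(iv) by listed virtual mosaic moves. I would first establish a surface analogue of Kuriya--Shehab: any two mosaic tilings of the same diagram on the same identified surface are connected by KL moves, surface isotopies, and injections/ejections. This absorbs (i) along with all interior instances of (ii), and it handles both the freedom in how finely the diagram is cut into tiles and the freedom in grid size. Reidemeister moves that run across the glued boundary are then produced by combining KL moves with the $\mathit{SI}_i$ isotopies, exactly as in the worked ``R1-like move through the boundary'' of Section \ref{sec-inject}, while the $\mathit{Stab}_i$ moves supply (iii).

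The main obstacle I expect is item (iv), together with proving that the finite lists $\mathit{SI}_1,\dots,\mathit{SI}_9$ and $\mathit{Stab}_1,\dots,\mathit{Stab}_4$ are genuinely \emph{complete}: that they generate every relabeling of the $4n$ boundary edges induced by an orientation-preserving homeomorphism of $\Sigma_M$, and every (de)stabilization needed to adjust genus, without invoking any move beyond those listed. This is precisely the gap that keeps the statement a conjecture rather than a theorem. My proposed route is a normal-form argument: using the construction in the preceding representability theorem, reduce an arbitrary virtual mosaic representing $L$ to a canonical braid-closure mosaic (diagonal braid generators, standard closure identifications, virtual crossings pushed into the surface), checking that every reduction step is a listed move; then show that two braid-closure mosaics whose braids have equivalent closures are connected by the virtual analogues of the Markov moves, conjugation and stabilization of the braid, each realized as a short sequence of $P_i$, $R_i$, $\mathit{SI}_i$, $\mathit{Stab}_i$, and $\iota_{ij}$ moves. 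If the Markov moves can be so realized, completeness follows; the hardest verification will be that the boundary-edge relabelings forced by braid conjugation require no move outside the given list, since it is conceivable that certain homeomorphisms of high-genus surfaces demand an additional generator.
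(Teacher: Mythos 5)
The statement you are addressing is left as an open conjecture in the paper: the authors give no proof of it, only the representability theorem (every virtual link is represented by some virtual mosaic) and a list of moves each of which is claimed to preserve virtual link type. So there is no argument of theirs to compare yours against; the relevant question is whether your proposal actually closes the gap, and it does not. Your ``only if'' direction (soundness of the moves) is fine and matches what the paper implicitly establishes when it introduces the KL moves, surface isotopies, stabilizations, and injections. But the ``if'' direction---completeness---is exactly where your argument stops being a proof and becomes a research plan. You state as goals, without proof: (a) a surface analogue of Kuriya--Shehab asserting that any two tilings of the same diagram on the same identified surface are connected by listed moves; (b) that every boundary-crossing Reidemeister move factors through KL moves and the $\mathit{SI}_i$; (c) that the virtual Markov moves (conjugation and braid stabilization) are realizable by short sequences of listed moves; and (d) that every orientation-preserving homeomorphism of the identified surface---equivalently, every admissible relabeling of the $4n$ boundary edges---is generated by $\mathit{SI}_{1},\dots,\mathit{SI}_{9}$ and $\mathit{Stab}_1,\dots,\mathit{Stab}_4$. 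Each of (a)--(d) is a substantial lemma in its own right, and you explicitly concede that (d) might fail, i.e., that an additional generator might be needed. An argument that openly allows its own move list to be incomplete cannot establish the biconditional; this concession is precisely why the paper records the statement as a conjecture rather than a theorem.

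That said, your strategy is the natural one and is worth pursuing: soundness move-by-move, completeness via a normal form (braid-closure mosaics plus virtual Markov moves) or via a lift of Kuriya--Shehab from the plane to identified surfaces. To turn the plan into a theorem, the concrete work is either to prove (d) directly, by showing that the $\mathit{SI}$ and $\mathit{Stab}$ moves act transitively on the edge-identification data realizing a given stable equivalence class, or to sidestep (d) entirely by proving that any two virtual mosaics determining the same Gauss code are move-equivalent, since Gauss-code equivalence already captures virtual link type. Until one of these is carried out, the statement remains open, and your write-up should be presented as a proposed program rather than a proof.
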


\section{Conclusion}

While we have made progress in the study of virtual mosaic knots, there are many more interesting open questions that can be studied. Here, in conclusion, we list a number of our favorites.

\begin{quest} If $\vmn{L}=n$, and the minimum genus among $n$-mosaics representing $L$ is $g$, is $g$ the genus of the knot? In particular, if $K$ is a classical knot, is $\vmn{K}$ always realized on a genus-0 mosaic? The answer is yes for classical knots with crossing number $\leq 8$, for 9-crosssing classical knots with $n=3$, and for virtual knots with $n\leq 2$. (The latter result was verified using \cite{adams}.) 
\end{quest}

% \begin{quest} Given a fixed genus $g$ and a virtual knot $K$, what is the smallest $n$ for which $K$ can be represented as a virtual $n$-mosaic with genus $g$?
% \end{quest}%[No progress]

\begin{quest} What is the relationship between virtual mosaic number and mosaic number for classical knots? By Proposition \ref{class-virt-rel}, $\vmn{K}\le m(K)-2$ for nontrivial $K$, but sometimes this inequality is strict. See Example \ref{ex:7_2}, for instance. If equality does not always hold, is there a fixed integer $c$ for which $\vmn{K}+c\ge \mn{K}-2$? %Conjecture no: Could have arbitrary nesting.
\end{quest}

\begin{quest}Knot $7_1$ is an example of a knot where the virtual mosaic number is realized on a non-reduced projection  (i.e., a projection with more crossings than the crossing number). Knots $7_3$, $8_7$, $8_{10}$ and $8_{19}$ are also examples illustrating this phenomenon. Is there an infinite family of such examples?
\end{quest}

\begin{quest} Which links with crossing number $\leq 9$ have $\vmn{L} = 3$?
\end{quest}

\begin{quest} For a given $n$, how many distinct virtual knots (or links) can be represented on a virtual $n$ mosaic?
\end{quest}%[Probably impossible]

\begin{quest} How can we detect virtual mosaic unknots? Are there virtual mosaic ``culprits" where mosaics need to be made more complex (for instance, via injections) before these unknot representations can be simplified?
\end{quest}%[No progress]

\begin{quest} Which nontrivial tile patterns have edge identifications that yield the unknot?
\end{quest}

\begin{quest}
Given a tile pattern, what is the probability of getting an unknot with a random choice of genus 0, 1-component edge labelings? 
For a given tile pattern, what is the distribution of knot types that come from genus 0, 1-component edge labelings.
\end{quest}

\begin{quest} Can we generalize tile patterns for any infinite knot families to determine nontrivial bounds on $\vmn{K}$? (Note: $n$ is an upper bound for $\vmn{T_{2,p}}$ for $p=\lceil\frac{n^2}{2}\rceil$.)
\end{quest}

\begin{quest} If each mosaic tile is given a weight and the \emph{weight} of a virtual $n$-mosaic is the sum of the weights of its $n^2$ tiles, what is the relationship between a given knot's minimum weight and its virtual mosaic number?
\end{quest}%[No progress]

\appendix
\section{Classical Knots with Eight or Fewer Crossings and Minimal Virtual Mosaics}\label{minimal-mosaics}
\noindent
\resizebox{\textwidth}{!}{%
\begin{tabular}{cccc}
\begin{vmosaic}[2.4]{1}{1}{{a}}{{a}}{{b}}{{b}} % unknot
\tilei \\
\end{vmosaic} &  
\begin{vmosaic}[1.2]{2}{2}{{a,b}}{{b,a}}{{c,c}}{{d,d}} % classical trefoil
\tilex \& \tilei \\
\tileix \& \tilex \\
\end{vmosaic} &
\begin{vmosaic}[.8]{3}{3}{{a,b,b}}{{a,c,d}}{{d,c,e}}{{f,f,e}} % classical figure-8
\tileix \& \tilei \& \tileii \\
\tileiii \& \tileix \& \tilex \\
\tileii \& \tilex \& \tileiv \\
\end{vmosaic} &
\begin{vmosaic}[.8]{3}{3}{{a,b,c}}{{d,e,e}}{{d,c,b}}{{a,f,f}} % knot 5.1
\tilex \& \tilevii \& \tilex \\
\tilevii \& \tilex \& \tilevii \\
\tilex \& \tilevii \& \tilex \\
\end{vmosaic} \\
$0_1$ & $3_1$ & $4_1$ & $5_1$ 
\end{tabular}
}

\medskip\noindent
\resizebox{\textwidth}{!}{%
\begin{tabular}{cccc}
\begin{vmosaic}[.8]{3}{3}{{a,a,b}}{{c,d,d}}{{e,e,c}}{{b,f,f}} % knot 5.2
\tilex \& \tilevii \& \tilex \\
\tilevii \& \tilex \& \tilevii \\
\tilex \& \tilevii \& \tilex \\
\end{vmosaic} &
\begin{vmosaic}[.8]{3}{3}{{a,b,b}}{{a,c,c}}{{d,e,e}}{{d,f,f}} % knot 6.1
\tilex \& \tileix \& \tilex \\
\tileix \& \tilevii \& \tilevii \\
\tilex \& \tileviii \& \tilex \\
\end{vmosaic} &
\begin{vmosaic}[.8]{3}{3}{{a,b,c}}{{c,d,e}}{{e,d,b}}{{a,f,f}} % knot 6.2
\tilex \& \tileix \& \tilei \\
\tileix \& \tilevii \& \tileix \\
\tilex \& \tileix \& \tileiv \\
\end{vmosaic} &
\begin{vmosaic}[.8]{3}{3}{{a,a,b}}{{c,c,b}}{{d,e,f}}{{f,e,d}} % knot 6.3
\tileix \& \tilex \& \tileix \\
\tileiii \& \tileix \& \tilex \\
\tileo \& \tileiii \& \tileix \\
\end{vmosaic} \\
$5_2$ & $6_1$ & $6_2$ & $6_3$
\end{tabular}
}

\medskip\noindent
\resizebox{\textwidth}{!}{%
\begin{tabular}{cccc}
\begin{vmosaic}[.8]{3}{3}{{a,b,c}}{{d,e,e}}{{d,c,b}}{{a,f,f}} % knot 7.1
\tileix \& \tileix \& \tileix \\
\tilex \& \tilex \& \tilex \\
\tileix \& \tileix \& \tileix \\
\end{vmosaic} &
\begin{vmosaic}[.8]{3}{3}{{a,b,b}}{{c,d,d}}{{c,e,e}}{{a,f,f}} % knot 7.2
\tilex \& \tileix \& \tilex \\
\tileix \& \tileviii \& \tilevii \\
\tilex \& \tileix \& \tilex \\
\end{vmosaic} &
\begin{vmosaic}[.8]{3}{3}{{a,a,b}}{{c,d,d}}{{e,e,f}}{{f,c,b}} % knot 7.3
\tilex \& \tileix \& \tilex \\
\tilex \& \tileix \& \tilex \\
\tileiii \& \tilex \& \tileix \\
\end{vmosaic} &
\begin{vmosaic}[.8]{3}{3}{{a,a,b}}{{b,c,c}}{{d,d,e}}{{e,f,f}} % knot 7.4
\tilex \& \tileix \& \tilei \\
\tileix \& \tilex \& \tileix \\
\tileiii \& \tileix \& \tilex \\
\end{vmosaic} \\
$7_1$ & $7_2$ & $7_3$ & $7_4$
\end{tabular}
}

\medskip\noindent
\resizebox{\textwidth}{!}{%
\begin{tabular}{cccc}
\begin{vmosaic}[.8]{3}{3}{{a,a,b}}{{c,c,b}}{{d,d,e}}{{f,f,e}} % knot 7.5
\tileviii \& \tilevii \& \tileix \\
\tilex \& \tileix \& \tilex \\
\tileix \& \tilex \& \tileix \\
\end{vmosaic} &
\begin{vmosaic}[.8]{3}{3}{{a,a,b}}{{b,c,d}}{{e,f,f}}{{e,d,c}} % knot 7.6
\tileix \& \tilex \& \tilei \\
\tilex \& \tileviii \& \tilex \\
\tileix \& \tilex \& \tileix \\
\end{vmosaic} &
\begin{vmosaic}[.8]{3}{3}{{a,a,b}}{{b,c,d}}{{e,e,d}}{{c,f,f}} % knot 7.7
\tileix \& \tilex \& \tilei \\
\tilevii \& \tileix \& \tilex \\
\tileix \& \tilex \& \tileix \\
\end{vmosaic} &
\begin{vmosaic}[.6]{4}{4}{{a,a,b,b}}{{c,c,d,e}}{{e,d,f,f}}{{g,g,h,h}} % knot 8.1
\tileix \& \tilevii \& \tileix \& \tilex \\
\tilevii \& \tileix \& \tileviii \& \tileix \\
\tileix \& \tileviii \& \tileiv \& \tileiii \\
\tilex \& \tileix \& \tilei \& \tileo \\
\end{vmosaic} \\
$7_5$ & $7_6$ & $7_7$ & $8_1$
\end{tabular}
}

\medskip\noindent
\resizebox{\textwidth}{!}{%
\begin{tabular}{cccc}
\begin{vmosaic}[.6]{4}{4}{{a,a,b,c}}{{d,e,f,f}}{{e,d,c,b}}{{g,h,h,g}} % knot 8.2
\tilex \& \tilevii \& \tilex \& \tileix \\
\tileiii \& \tilex \& \tilevii \& \tilex \\
\tileii \& \tilevii \& \tilex \& \tilevii \\
\tileix \& \tilevii \& \tilevii \& \tilex \\
\end{vmosaic} &
\begin{vmosaic}[.6]{4}{4}{{a,a,b,c}}{{d,d,e,e}}{{f,g,g,f}}{{c,h,h,b}} % knot 8.3
\tileix \& \tilevii \& \tilevii \& \tilex \\
\tilex \& \tileviii \& \tileviii \& \tileix \\
\tileix \& \tilevii \& \tilevii \& \tilex \\
\tilex \& \tileiv \& \tileiii \& \tileix \\
\end{vmosaic} &
\begin{vmosaic}[.6]{4}{4}{{a,b,b,a}}{{c,d,d,c}}{{e,e,f,f}}{{g,g,h,h}} % knot 8.4
\tileii \& \tilex \& \tileix \& \tilei \\
\tilevi \& \tilevi \& \tileiii \& \tileix \\
\tilex \& \tileix \& \tilev \& \tilex \\
\tileix \& \tilex \& \tilev \& \tileiv \\
\end{vmosaic} &
\begin{vmosaic}[.8]{3}{3}{{a,b,b}}{{c,c,a}}{{d,e,e}}{{f,f,d}} % knot 8.5
\tileix \& \tilex \& \tileix \\
\tilex \& \tileviii \& \tilex \\
\tileix \& \tilex \& \tileix \\
\end{vmosaic} \\
$8_2$ & $8_3$ & $8_4$ & $8_5$
\end{tabular}
}

\medskip\noindent
\resizebox{\textwidth}{!}{%
\begin{tabular}{cccc}
\begin{vmosaic}[.6]{4}{4}{{a,a,b,b}}{{c,d,e,e}}{{f,f,g,g}}{{d,h,h,c}} % knot 8.6
\tilex \& \tilevii \& \tilei \& \tileii \\
\tileix \& \tileviii \& \tileix \& \tilex \\
\tilex \& \tileix \& \tilevii \& \tileiv \\
\tileix \& \tilex \& \tileiv \& \tileo \\
\end{vmosaic} &
\begin{vmosaic}[.8]{3}{3}{{a,a,b}}{{c,d,e}}{{f,f,e}}{{d,c,b}} % knot 8.7
\tilex \& \tileix \& \tilex \\
\tilex \& \tileix \& \tilex \\
\tileix \& \tilex \& \tileix \\
\end{vmosaic} &
\begin{vmosaic}[.8]{3}{3}{{a,b,b}}{{a,c,c}}{{d,e,e}}{{f,f,d}} % knot 8.8
\tileix \& \tilex \& \tileix \\
\tilex \& \tileviii \& \tilex \\
\tileix \& \tilex \& \tileix \\
\end{vmosaic} &
\begin{vmosaic}[.6]{4}{4}{{a,a,b,c}}{{c,d,e,e}}{{f,f,g,g}}{{d,h,h,b}} % knot 8.9
\tilex \& \tilevii \& \tilevii \& \tilei \\
\tileix \& \tileviii \& \tileviii \& \tileviii \\
\tilex \& \tileix \& \tilex \& \tileix \\
\tileiv \& \tileiii \& \tileix \& \tilex \\
\end{vmosaic} \\
$8_6$ & $8_7$ & $8_8$ & $8_9$
\end{tabular}
}

\medskip\noindent
\resizebox{\textwidth}{!}{%
\begin{tabular}{cccc}
\begin{vmosaic}[.8]{3}{3}{{a,a,b}}{{c,d,d}}{{e,e,c}}{{b,f,f}} % knot 8.10
\tilex \& \tileix \& \tileix \\
\tileix \& \tileix \& \tilex \\
\tileix \& \tilex \& \tileix \\
\end{vmosaic} &
\begin{vmosaic}[.6]{4}{4}{{a,a,b,b}}{{c,c,d,d}}{{e,e,f,f}}{{g,h,h,g}} % knot 8.11
\tilex \& \tilevii \& \tilei \& \tileo \\
\tileix \& \tileviii \& \tileix \& \tilei \\
\tilex \& \tileix \& \tilex \& \tileiv \\
\tileix \& \tilex \& \tileiv \& \tileo \\
\end{vmosaic} &
\begin{vmosaic}[.8]{3}{3}{{a,a,b}}{{c,d,d}}{{e,f,f}}{{e,c,b}} % knot 8.12
\tileix \& \tilex \& \tileix \\
\tilevii \& \tileix \& \tilex \\
\tileix \& \tilex \& \tileix \\
\end{vmosaic} &
\begin{vmosaic}[.8]{3}{3}{{a,b,b}}{{c,c,d}}{{e,f,f}}{{e,d,a}} % knot 8.13
\tileii \& \tilex \& \tileix \\
\tilex \& \tileix \& \tilex \\
\tileix \& \tilex \& \tileix \\
\end{vmosaic} \\
$8_{10}$ & $8_{11}$ & $8_{12}$ & $8_{13}$
\end{tabular}
}

\medskip\noindent
\resizebox{\textwidth}{!}{%
\begin{tabular}{cccc}
\begin{vmosaic}[.8]{3}{3}{{a,b,b}}{{c,d,d}}{{e,f,f}}{{e,c,a}} % knot 8.14
\tileii \& \tilex \& \tileix \\
\tilex \& \tileix \& \tilex \\
\tileix \& \tilex \& \tileix \\
\end{vmosaic} &
\begin{vmosaic}[.8]{3}{3}{{a,b,c}}{{d,d,e}}{{f,f,e}}{{c,b,a}} % knot 8.15
\tileii \& \tilex \& \tileix \\
\tilex \& \tileix \& \tilex \\
\tileix \& \tilex \& \tileix \\
\end{vmosaic} &
\begin{vmosaic}[.6]{4}{4}{{a,b,c,d}}{{d,e,e,f}}{{g,g,f,h}}{{h,c,b,a}} % knot8.16
\tileo \& \tileiii \& \tileix \& \tilei \\
\tilei \& \tileii \& \tilex \& \tileix \\
\tileix \& \tilex \& \tileix \& \tilex \\
\tileiii \& \tileix \& \tileiv \& \tileiii \\
\end{vmosaic} &
\begin{vmosaic}[.6]{4}{4}{{a,b,c,c}}{{b,d,d,e}}{{f,f,g,g}}{{e,a,h,h}} % knot 8.17
\tileiii \& \tilex \& \tilei \& \tileii \\
\tileii \& \tileix \& \tilex \& \tileix \\
\tileix \& \tilex \& \tileix \& \tilex \\
\tileiv \& \tileiii \& \tileiv \& \tileiii \\
\end{vmosaic} \\
$8_{14}$ & $8_{15}$ & $8_{16}$ & $8_{17}$
\end{tabular}
}

\medskip\noindent
\resizebox{\textwidth}{!}{%
\begin{tabular}{cccc}
\begin{vmosaic}[.6]{4}{4}{{a,b,c,c}}{{b,d,e,e}}{{d,f,g,g}}{{f,a,h,h}} % knot 8.18
\tileiii \& \tileix \& \tilei \& \tileii \\
\tileii \& \tilex \& \tileix \& \tilex \\
\tilex \& \tileix \& \tilex \& \tileiv \\
\tileiv \& \tileiii \& \tileix \& \tilei \\
\end{vmosaic} &
\begin{vmosaic}[.8]{3}{3}{{a,a,b}}{{c,d,d}}{{e,e,c}}{{b,f,f}} % knot 8.19
\tileix \& \tilex \& \tileix \\
\tilex \& \tilex \& \tilex \\
\tileix \& \tilex \& \tileix \\
\end{vmosaic} &
\begin{vmosaic}[.8]{3}{3}{{a,a,b}}{{c,d,e}}{{e,d,c}}{{f,f,b}} % knot 8.20
\tilex \& \tileix \& \tilex \\
\tilex \& \tileix \& \tilex \\
\tileix \& \tilex \& \tileiv \\
\end{vmosaic} &
\begin{vmosaic}[.8]{3}{3}{{a,a,b}}{{c,c,b}}{{d,e,f}}{{f,e,d}} % knot 8.21
\tilex \& \tileix \& \tileix \\
\tilex \& \tileix \& \tilex \\
\tileiii \& \tilex \& \tileix \\
\end{vmosaic} \\
$8_{18}$ & $8_{19}$ & $8_{20}$ & $8_{21}$
\end{tabular}
}

\section{Virtual Knots with Virtual Mosaic Number Two}\label{minimal-virtual-mosaics}
\noindent
\resizebox{\textwidth}{!}{%
\begin{tabular}{cccc}
\begin{vmosaic}[1.2]{2}{2}{{a,b}}{{b,a}}{{c,d}}{{c,d}} % virtual 2.1
\tilex \& \tilei \\
\tileix \& \tileviii \\
\end{vmosaic} &
\begin{vmosaic}[1.2]{2}{2}{{a,b}}{{c,d}}{{a,b}}{{c,d}} % virtual 3.1
\tilex \& \tileviii \\
\tilex \& \tileix \\
\end{vmosaic} &
\begin{vmosaic}[1.2]{2}{2}{{a,b}}{{c,a}}{{d,b}}{{c,d}} % virtual 3.2
\tilex \& \tileviii \\
\tilex \& \tilex \\
\end{vmosaic} &
\begin{vmosaic}[1.2]{2}{2}{{a,b}}{{c,d}}{{a,b}}{{c,d}} % virtual 3.3
\tilex \& \tileviii \\
\tilex \& \tilex \\
\end{vmosaic} \\
$2.1,g=1$ & $3.1, g=2$ & $3.2, g=1$ & $3.3, g=2$ 
\end{tabular}
}

\medskip\noindent
\resizebox{\textwidth}{!}{%
\begin{tabular}{cccc}
\begin{vmosaic}[1.2]{2}{2}{{a,b}}{{c,a}}{{d,b}}{{d,c}} % virtual 3.4
\tilex \& \tileviii \\
\tilex \& \tilex \\
\end{vmosaic} &
\begin{vmosaic}[1.2]{2}{2}{{a,a}}{{b,c}}{{d,b}}{{c,d}} % virtual 3.5
\tilex \& \tileix \\
\tilex \& \tilevii \\
\end{vmosaic} &
\begin{vmosaic}[1.2]{2}{2}{{a,b}}{{b,a}}{{c,c}}{{d,d}} % classical trefoil
\tilex \& \tilei \\
\tileix \& \tilex \\
\end{vmosaic} &
\begin{vmosaic}[1.2]{2}{2}{{a,b}}{{c,a}}{{b,c}}{{d,d}} % virtual 3.7
\tilex \& \tileix \\
\tileix \& \tilevii \\
\end{vmosaic} \\
$3.4, g=2$ & $3.5, g=1$ & $3.6, g=0$ & $3.7, g=1$ 
\end{tabular}
}

\medskip\noindent
\resizebox{\textwidth}{!}{%
\begin{tabular}{cccc}
\begin{vmosaic}[1.2]{2}{2}{{a,b}}{{c,b}}{{c,d}}{{a,d}} % virtual 4.1
\tilex \& \tileix \\
\tileix \& \tilex \\
\end{vmosaic} &
\begin{vmosaic}[1.2]{2}{2}{{a,b}}{{a,c}}{{d,c}}{{d,b}} % virtual 4.4
\tilex \& \tilex \\
\tileix \& \tilex \\
\end{vmosaic} &
\begin{vmosaic}[1.2]{2}{2}{{a,b}}{{c,b}}{{c,d}}{{a,d}} % viritual 4.8
\tilex \& \tilex \\
\tileix \& \tileix \\
\end{vmosaic} &
\begin{vmosaic}[1.2]{2}{2}{{a,b}}{{a,c}}{{d,b}}{{d,c}} % virtual 4.12
\tilex \& \tilex \\
\tileix \& \tileix \\
\end{vmosaic} \\
$4.1, g=2$ & $4.4, g=2$ & $4.8, g=2$ & $4.12, g=1$ 
\end{tabular}
}

\medskip\noindent
\resizebox{\textwidth}{!}{% 
\begin{tabular}{cccc}
\begin{vmosaic}[1.2]{2}{2}{{a,b}}{{a,c}}{{d,c}}{{b,d}} % virtual 4.14
\tilex \& \tileix \\
\tileix \& \tilex \\
\end{vmosaic} &
\begin{vmosaic}[1.2]{2}{2}{{a,b}}{{c,a}}{{c,d}}{{b,d}} % virtual 4.21
\tilex \& \tilex \\
\tileix \& \tilex \\
\end{vmosaic} &
\begin{vmosaic}[1.2]{2}{2}{{a,b}}{{a,c}}{{d,c}}{{b,d}} % viritual 4.30
\tilex \& \tilex \\
\tileix \& \tilex \\
\end{vmosaic} &
\begin{vmosaic}[1.2]{2}{2}{{a,b}}{{a,c}}{{d,d}}{{b,c}} % virtual 4.36
\tilex \& \tilex \\
\tileix \& \tilex \\
\end{vmosaic} \\
$4.14, g=2$ & $4.21,g=2$ & $4.30,g=2$ & $4.36,g=1$ 
\end{tabular}
}

\medskip\noindent
\resizebox{\textwidth}{!}{% 
\begin{tabular}{cccc}
\begin{vmosaic}[1.2]{2}{2}{{a,b}}{{a,c}}{{d,b}}{{d,c}} % virtual 4.37
\tilex \& \tilex \\
\tilex \& \tilex \\
\end{vmosaic} &
\begin{vmosaic}[1.2]{2}{2}{{a,a}}{{b,b}}{{c,d}}{{c,d}} % virtual 4.43
\tilex \& \tileix \\
\tileix \& \tilex \\
\end{vmosaic} &
\begin{vmosaic}[1.2]{2}{2}{{a,b}}{{a,c}}{{d,c}}{{b,d}} % viritual 4.48
\tilex \& \tilex \\
\tileix \& \tileix \\
\end{vmosaic} &
\begin{vmosaic}[1.2]{2}{2}{{a,b}}{{a,c}}{{d,c}}{{d,b}} % virtual 4.55
\tilex \& \tilex \\
\tileix \& \tileix \\
\end{vmosaic} \\
$4.37, g=1$ & $4.43, g=1$ & $4.48, g=2$ & $4.55, g=2$ 
\end{tabular}
}

\medskip\noindent
\resizebox{\textwidth}{!}{%
\begin{tabular}{cccc}
\begin{vmosaic}[1.2]{2}{2}{{a,b}}{{c,a}}{{c,d}}{{b,d}} % virtual 4.59
\tilex \& \tilex \\
\tileix \& \tileix \\
\end{vmosaic} &
\begin{vmosaic}[1.2]{2}{2}{{a,a}}{{b,c}}{{d,b}}{{d,c}} % virtual 4.64
\tilex \& \tileix \\
\tileix \& \tilex \\
\end{vmosaic} &
\begin{vmosaic}[1.2]{2}{2}{{a,b}}{{c,a}}{{c,b}}{{d,d}} % viritual 4.65
\tilex \& \tilex \\
\tileix \& \tilex \\
\end{vmosaic} &
\begin{vmosaic}[1.2]{2}{2}{{a,b}}{{a,c}}{{d,c}}{{b,d}} % virtual 4.71
\tilex \& \tilex \\
\tilex \& \tilex \\
\end{vmosaic} \\
$4.59, g=2$ & $4.64, g=1$ & $4.65, g=1$ & $4.71, g=2$ 
\end{tabular}
}

\medskip\noindent
\resizebox{\textwidth}{!}{%
\begin{tabular}{cccc}
\begin{vmosaic}[1.2]{2}{2}{{a,b}}{{a,c}}{{d,c}}{{d,b}} % virtual 4.77
\tilex \& \tilex \\
\tilex \& \tilex \\
\end{vmosaic} &
\begin{vmosaic}[1.2]{2}{2}{{a,b}}{{c,a}}{{d,b}}{{c,d}} % virtual 4.92
\tilex \& \tilex \\
\tilex \& \tilex \\
\end{vmosaic} &
\begin{vmosaic}[1.2]{2}{2}{{a,b}}{{c,a}}{{d,b}}{{c,d}} % viritual 4.95
\tilex \& \tilex \\
\tileix \& \tilex \\
\end{vmosaic} &
\begin{vmosaic}[1.2]{2}{2}{{a,b}}{{c,a}}{{d,c}}{{b,d}} % virtual 4.99
\tilex \& \tilex \\
\tilex \& \tilex \\
\end{vmosaic} \\
$4.77, g=2$ & $4.92, g=1$ & $4.95, g=1$ & $4.99, g=1$ 
\end{tabular}
}

\medskip\noindent
\resizebox{\textwidth}{!}{%
\begin{tabular}{cccc}
\begin{vmosaic}[1.2]{2}{2}{{a,b}}{{c,a}}{{d,b}}{{c,d}} % virtual 4.104
\tilex \& \tileix \\
\tileix \& \tilex \\
\end{vmosaic} &
\begin{vmosaic}[1.2]{2}{2}{{a,b}}{{c,a}}{{d,c}}{{b,d}} % virtual 4.105
\tilex \& \tileix \\
\tileix \& \tilex \\
\end{vmosaic} &
\phantom{\begin{vmosaic}[2.4]{1}{1}{{a}}{{a}}{{b}}{{b}} % empty
\tileo \\
\end{vmosaic}} &
\phantom{\begin{vmosaic}[2.4]{1}{1}{{a}}{{a}}{{b}}{{b}} % empty
\tileo \\
\end{vmosaic}} \\
$4.104, g=1$ & $4.105, g=1$ & & 
\end{tabular}
}

\section*{Acknowledgements}

We would like to thank Kyle Miller for providing us with the KnotFolio and Virtual KnotFolio tools that were so useful in conducting this research. The authors would also like to thank the Simons Foundation (\#426566, Allison Henrich) for their support of this research.

%%%%%%%%%%%%%%%%%%%%%%%%%%%%%%%%%%%%%%%%

\end{document}